\newcommand*{\circled}[1]{\lower.7ex\hbox{\tikz\draw (0pt, 0pt)%
    circle (.5em) node {\makebox[1em][c]{\small #1}};}}
\newcommand{\normmm}[1]{{\left\vert\kern-0.25ex\left\vert\kern-0.25ex\left\vert #1
    \right\vert\kern-0.25ex\right\vert\kern-0.25ex\right\vert}}
\newtheorem{Definition}{Definition}[section]
\newtheorem{Lemma}{Lemma}[section]
\newtheorem{Theorem}{Theorem}[section]
\newtheorem{Remark}{Remark}[section]
\newcounter{saveeqn}%
\newcommand{\enabstractname}{Abstract}
\title{\bf  An efficient Chorin-Temam projection proper orthogonal decomposition based reduced-order model for nonstationary Stokes equations}
\author{
Xi Li\footnote{School of Mathematics, Sichuan University, Chengdu, Sichuan 610064, China (li\_xi@stu.scu.edu.cn). The work of this author was supported by the National Natural Science Foundation of China(Grant No. 11971337).},
\ Yan Luo\footnote{School of Mathematical Sciences, University of Electronic Science and Technology of China,
Chengdu, Sichuan 611731, China (luoyan\_16@126.com). The work of this author was supported by
the Young Scientists Fund of the National Natural Science Foundation of China(Grant No. 11901078) and
by the Fundamental Research Funds for the Central Universities(Grant No. ZYGX2020J021).}
\ and Minfu Feng\footnote{Corresponding author. School of Mathematics, Sichuan University, Chengdu, Sichuan 610064, China (fmf@scu.edu.cn). The work of this author was supported by the National Natural Science Foundation of China(Grant No. 11971337).}
}
\date{}
\begin{document}
\maketitle

\parskip 10pt
\thispagestyle{empty}

\begin{abstract}
\indent In this paper, we propose an efficient proper orthogonal decomposition based reduced-order model(POD-ROM) for nonstationary Stokes equations, which combines the classical projection method with POD technique. This new scheme mainly owns two advantages: the first one is low computational costs since the classical projection method decouples the reduced-order velocity variable and reduced-order pressure variable, and POD technique further improves the computational efficiency; the second advantage consists of circumventing the verification of classical LBB/inf-sup condition for mixed POD spaces with the help of pressure stabilized Petrov-Galerkin(PSPG)-type projection method, where the pressure stabilization term is inherent which allows the use of non inf-sup stable elements without adding extra stabilization terms. We first obtain the convergence of PSPG-type finite element projection scheme, and then analyze the proposed projection POD-ROM's stability and convergence. Numerical experiments validate out theoretical results.\\
{\bf Keywords: }{Projection method;\quad PSPG stabilization;\quad Proper orthogonal decomposition.}\\
\end{abstract}

\section{Introduction}
\noindent Reduced Order Models(ROMs) have become widespread in the scientific community and owns constant attention in academic community because of its strength and efficiency in alleviating the huge computational costs needed in many modern engineering applications and academic purposes. As one of the most popular ROM approaches, the proper orthogonal decomposition(POD) strategy aims to use singular value decomposition(SVD) to extract the dominant modes(in the sense of energy) from a high-resolution numerical scheme(commonly referred to full order models(FOMs)) to form POD bases/modes. Onto these reduced-order bases, a Galerkin projection can be used to project into the space spanned by these bases/modes to obtain the reduced-order numerical solution, this pioneering work is done by Kunisch and Volkwein in \cite{Galerkin-POD-NM-2001,Galerkin-POD-SINUM-2002} which has laid the foundation of numerical analysis for Galerkin-POD. Other references about Galerkin-POD can be found in \cite{Galerkin-POD-ErrorAnalysis-SINUM-2014, Luo-POD-NS, Luo-POD,Galerkin-POD-Volkwein-LectureNotes-2011} and so on. Applications of Galerkin-POD on incompressible flows, especially with the spatial variables discretized by finite element method(FE-POD), has a long history, from Luo et al.\cite{Luo-POD-NS} obtaining FE-POD-ROM for nonstationary Navier-Stokes equations based on finite difference method and mixed finite element method to discretize temporal and spatial variables, Ravindran\cite{Ravindran-Boussinesq-NMPDE-2010} analyzing FE-POD-ROM for nonstationary Boussinesq equations, to other finite element methods, like discontinuous Galerkin and hybrid discontinuous Galerkin, for spatial discrete are applied to get FE-POD-ROM\cite{POD-HDG}. \\
\indent Traditional mixed finite element POD-ROM had encountered a problem, that is, when considering the classical mixed finite element scheme as the snapshot source of the POD, the pressure variable is eliminated because of the classical LBB/inf-sup condition, so only the velocity finite element solution is used as the snapshot of the POD technique to construct the POD bases/modes, which makes the final obtained FE-POD-ROM does not contain the pressure variable. This makes it improper to use POD technique directly to reduce the model order in some engineering and academic models that need the pressure variables. Among that problem, the pressure field must be reconstructed a posterior using some pressure recovery techniques, like pressure Poisson equation\cite{POD-PressurePoissonEquation-JFM-2005}, Supremizer stabilized technique\cite{POD-Supremizer-2015-IJNME} and so on. However, just as stated in \cite{POD-AC-SINUM-2020} and \cite{NS-POD-2014-JCP}, there are some deficiencies in these a posterior pressure recovery techniques, such as the usage of those techniques still need the fulfillment of LBB/inf-sup condition which restricts some flexible pairs of mixed finite element spaces, and the un-physical and unclear boundary condition of pressure Poisson equation poses a challenge for the utilization of this technique. With those in mind, recently, a new research direction related to FE-POD has aroused some researchers' interest, we refer as stabilized FE-POD, which draws inspiration from the successful applications of stabilized techniques in finite element method. Caiazzo et al.\cite{NS-POD-2014-JCP} firstly proposed to combine stabilization technique, like residual-based streamline-upwind Petrov-Galerkin(SUPG) stabilization, with FE-POD to preserve the finite element pressure solution, so that the POD reduced-order pressure can be obtained from the snapshot of finite element pressure solution. From then on, other stabilized techniques were introduced into FE-POD, like local projection stabilization(LPS) FE-POD\cite{POD-LPS-SINUM-2020,POD-LPS-SINUM-2021}, artificial compressible(AC) FE-POD\cite{POD-AC-SINUM-2020}, streamline diffusion(SD) FE-POD\cite{POD-SD-2021-JCP} and so on. \\
\indent In this paper, our aim is to combine the classical projection method and POD technique to propose an efficient projection POD-ROM. As one of the fast decoupled numerical algorithms for solving incompressible fluid, projection method has become a classical one after it was proposed by Chorin and Temam \cite{Proj-Chorin-1969-MC, Proj-Teman-1968-French} and developed by Shen and Guermond \cite{Proj-Shen-1992-NM,Proj-Guermond-1998-NM,Proj-Shen-1992-SINUM,Proj-Overview-2006-CMAME} etc. Apart from that, as mentioned by Rannacher in \cite{Rannacher-1992-Projection}, another remarkable feature of this method is that the original Chorin-Temam projection method owns an inherent mechanism of pressure stability. This mechanism was proposed by Rannacher \cite{ Rannacher-1992-Projection} and was realized by Frutos et al. \cite{Frutos-ModifiedProjection-2018-AMC}. These two advantages of the classical projection method are merged into the POD technique to get our projection POD-ROM. The main difference in our approach with respect to those stabilized FE-POD-ROM is that the pressure stabilized Petrov-Galerkin(PSPG)-like FE-POD-ROM has the inherent pressure stabilization, which means no additional pressure stabilization term is needed to circumvent the LBB/inf-sup condition of mixed POD spaces which is still an open problem to verify. \\
\indent The main contribution of this paper is that a more efficient ROM numerical scheme for solving incompressible fluid is obtained by combining the advantageous computational efficiency of both projection method and POD technique. This POD-ROM scheme stems from replacing the finite element functions in the fully discrete scheme of the projection method with the POD reduced-order functions. To this end, we analyze the fully discrete scheme of classical projection method in the form of with inherent pressure stabilized term, and numerically verify some theoretical results concerning with this finite element scheme. Furthermore, we take the finite element solution as snapshots in order to use method of snapshots to construct POD modes and POD spaces, and obtain our projection POD scheme. The stability and convergence of this scheme are analyzed, and which are also validated by some numerical experiments.          \\
\indent The outline of this paper is as follows: In section 2, we introduce some essential notations. In section 3, the classical projection method will be presented, followed by its stability and convergence analysis. In section 4, we propose the projection POD numerical scheme and prove its stability and convergence. In section 5, some numerical experiments will be investigated to confirm the theoritical results analyzed before. Finally, in section 6, we draw conclusions to complete this paper.
\section{Preliminaries and notations}
\noindent we consider the nonstationary Stokes equations
\begin{equation}\label{Evo-Stokes}
\left\{
\begin{aligned}
\partial_t \boldsymbol{u} - \nu\Delta \boldsymbol{u} + \nabla p &= \boldsymbol{f},\quad \text{in}\;\Omega,\\
\nabla\cdot \boldsymbol{u} &= 0,\quad \text{in}\;\Omega,\\
\boldsymbol{u} &= \boldsymbol{0},\quad \text{on}\;\partial\Omega,\\
\boldsymbol{u}(0,x) &= \boldsymbol{u}_0(x),\quad \text{in}\;\partial\Omega,
\end{aligned}
\right.
\end{equation}
where $\Omega$ is an open bounded domain in $\mathbb{R}^2$ with a sufficiently smooth boundary $\partial\Omega$. The unknowns are the vector function $\boldsymbol{u}$(velocity) and the scalar function $p$(pressure).\\
\indent In order to state the above problem properly, we need some settings. Let $L^p(\Omega),\;1\leq p\leq\infty$ denote the space of standard p-th power absolutely integrable functions with respect to the Lebesgue measure. In particular, $L^2(\Omega)$ is a Hilbert space endowed with the scalar product $(\cdot,\cdot)$ and its induced norm $\|\cdot\|_0$. Moreover, we use $\|\cdot\|^2_{\ell^2(L^2)}:= \Delta t\sum^{N}_{n=1}\|\cdot\|^2_0$ to denote the discrete integral in time with respect to $\|\cdot\|^2_0$. The Sobolev space $W^{m,p}(\Omega)$ and its norm $\|\cdot\|_{m,p}$ are also standard in the sense of \cite{Ciarlet-FEM-2002}. Furthermore, we use the abbreviation $H^m(\Omega):=W^{m,2}(\Omega)$ and $\|\cdot\|_m:=\|\cdot\|_{m,2}$, it's a Hilbert space with a scalar product. Moreover, we denote by $H^1_0(\Omega)$ the space of functions of $H^1(\Omega)$ with vanishing trace on $\partial\Omega$ and by $H^{-1}(\Omega)$ its dual space. The symbol $\langle\cdot,\cdot\rangle$ in general denotes the duality pairing between the space and it's dual one. Vector analogues of the Sobolev spaces along with vector valued functions are denoted by boldface letters, for instance $\boldsymbol{H}^m(\Omega):=\left(H^m(\Omega)\right)^2$. We all know that by Poincar\'{e} inequality\cite[p.135]{Brenner2008}, the seminorm $|u|_1=\|\nabla u\|_0$ in $H^1(\Omega)$ is a norm in $H^1_0(\Omega)$.
\\
\indent Throughout this paper, we use $C$ to denote a positive constant independent of $\Delta t$, $h$, not necessarily the same at each occurrence.\\
\indent Let us introduce some convenient spaces. The first one is
$$
\boldsymbol{H}({\rm div};\Omega):=\{\boldsymbol{u}\in\boldsymbol{L}^2(\Omega)|\; \nabla\cdot\boldsymbol{u}\in\boldsymbol{L}^2({\Omega})\},
$$
which is a Hilbert space with norm $\|\boldsymbol{u}\|_{{\rm div}}:=\|\boldsymbol{u}\|_0+\|\nabla\cdot\boldsymbol{u}\|_0$, and
$$
\boldsymbol{H}_0({\rm div};\Omega):= \{\boldsymbol{u}\in \boldsymbol{H}({\rm div};\Omega)|\; \boldsymbol{u}\cdot\boldsymbol{n}|_{\partial\Omega}=0\}.
$$
\indent For future use, we also need the following spaces, which play a key role in Helmholtz decomposition,
$$
\boldsymbol{J}_0 := \left\{\boldsymbol{v}\in \boldsymbol{H}^1_0(\Omega):\; \nabla\cdot \boldsymbol{v} = 0 \right\},
$$
and
$$
\boldsymbol{J}_1 := \left\{\boldsymbol{v}\in \boldsymbol{L}^2(\Omega):\; \nabla\cdot \boldsymbol{v} = 0,\;\text{and}\;\boldsymbol{v}\cdot \boldsymbol{n}|_{\partial\Omega} = 0 \right\}.
$$
\indent By means of spaces above, we give the classical Helmholtz decomposition\cite[p.29]{749} as
\begin{equation}\label{HelmDecom}
\boldsymbol{L}^2(\Omega) = \boldsymbol{J}_1(\Omega) \oplus [\boldsymbol{J}_1(\Omega)]^{\perp} = \boldsymbol{J}_1(\Omega) \oplus \{\nabla q: q\in H^1(\Omega)\}.
\end{equation}
\indent In order to give a variational formulation of problem (\ref{Evo-Stokes}), we consider the velocity space $\boldsymbol{V}:=\boldsymbol{H}^1_0$ and the pressure space $Q:=L^2_0=\{q\in L^2(\Omega)|\int_{\Omega}q\,{\rm d}x=0\}$. For a given time constant $T$, a weak formulation of the nonstationary Stokes equations are expressed as follows: for almost $\forall t\in(0,T]$, find $(\boldsymbol{u},p):(0,T]\rightarrow \boldsymbol{V}\times Q$, such that
\begin{equation}\label{ContVari}
\left\{
\begin{aligned}
(\partial_t \boldsymbol{u},\boldsymbol{v}) + \nu(\nabla \boldsymbol{u},\nabla\boldsymbol{v}) + (\nabla p,\boldsymbol{v}) &= \langle\boldsymbol{f},\boldsymbol{v}\rangle,\quad \forall \boldsymbol{v}\in \boldsymbol{V},\\
(\nabla\cdot \boldsymbol{u},q) &= 0,\quad \forall q\in Q. \\
\boldsymbol{u}(x,0) &= \boldsymbol{u}^0(x).
\end{aligned}
\right.
\end{equation}
\indent Let $\{\mathcal{T}_h\}$ be a uniformly regular family of triangulation of $\overline{\Omega}$(see \cite[p.111]{Ciarlet-FEM-2002}) and $h:=\max_{K\in\mathcal{T}_h}\{h_K|\; h_K\\ :={\rm diam}(K)\}$. Since we want to construct the POD space where snapshots are deriving from both the finite element approximated velocity and also pressure, contrary to the general choice which is to choose the inf-sup stable mixed finite element spaces, we choose equal-order mixed finite element spaces and then it's obviously not stable in the sense of the classical inf-sup condition, as
$$
Y^l_h:=\{v_h\in C^0(\overline{\Omega})|\; v_h|_K\in \mathbb{P}_l(K),\;\forall K\in \mathcal{T}_h\},\; l\geq1,
$$
where $\mathbb{P}_l(K)$ is the space of polynomials up to degree $l$ on $K$. Since there are two types of velocities in the temporal semi-discrete projection scheme, i.e., intermediate velocity $\widetilde{\boldsymbol{u}}^{n+1}$ and end-of-step velocity $\boldsymbol{u}^{n+1}$, we need the three finite element spaces to locate three full discrete variables of three unknowns. To this end, we define the equal-order finite element spaces for end-of-step velocity, intermediate velocity and pressure
$$
\boldsymbol{V}_h:= \boldsymbol{Y}^l_h \cap \boldsymbol{V},\quad \boldsymbol{W}_h:= \boldsymbol{Y}^l_h \cap \boldsymbol{J}_1,\quad Q_h:= Y^l_h \cap Q.
$$
\indent Since the triangulations are assumed to be shape regular, the following inverse inequality holds for each $\boldsymbol{v}_h\in \boldsymbol{V}_h$ on each mesh cell $K\in \{\mathcal{T}_h\}$(see \cite[Theorem 4.5.11]{Brenner2008}),
\begin{equation}\label{Inve-Ineq}
\|\boldsymbol{v}_h\|_{m,p,K} \leq C_{{\rm inv}} h^{l-m+2(\frac{1}{p} - \frac{1}{q})}_{K}\|\boldsymbol{v}_h\|_{l,q,K},
\end{equation}
where $0\leq l\leq m\leq 1$, $0\leq q\leq p\leq \infty$ and $h_k$ is the size of the mesh cell $K\in \{\mathcal{T}_h\}$. Then, $I_h$(resp. $J_h$) denotes a bounded linear interpolation operator $I_h:\; \boldsymbol{W}^{t,q}(K)\rightarrow \boldsymbol{V}_h$(resp. $J_h:\; W^{t,q}(K)\rightarrow Q_h$), where $1\leq q\leq\infty$, $0\leq s\leq t \leq l+1$, that satisfies the following optimal bounds(see \cite[Theorem 4.4.4]{Brenner2008})
\begin{equation}\label{Inte-Opti-Boun}
\begin{aligned}
\|\boldsymbol{v} - I_h\boldsymbol{v}\|_{s,q,K} &\leq C h_{K}^{t-s}\|\boldsymbol{v}\|_{t,q,K}, \\
\|q - J_hq\|_{s,q,K} &\leq C h_{K}^{t-s}\|q\|_{t,q,K},
\end{aligned}
\end{equation}
and also interpolation operator $J_h$ has following stability
\begin{equation}\label{H1Stab-Jh}
\|\nabla J_h q\|_0 \leq C \|\nabla q\|_0.
\end{equation}
\indent The Sobolev inclusion $L^2(\Omega)\hookrightarrow H^{-1}(\Omega)$ relation implies the following inequality holds:
\begin{equation}\label{SobolevImbed}
\|v\|_{-1} \leq C \|v\|_0,\quad \forall v\in L^2(\Omega).
\end{equation}

\section{Classical projection method}

\subsection{classical projection scheme}
\noindent The original projection method consists of firstly finding an intermediate velocity $\widetilde{\boldsymbol{u}}^{n+1}$ from the momentum equation without the pressure term, and then utilizing the classical Helmholtz decomposition (\ref{HelmDecom}) to get an end-of-step velocity $\boldsymbol{u}^{n+1}$ which is solenoidal and its normal component $\boldsymbol{u}^{n+1}\cdot \boldsymbol{n}$ vanishes. Meanwhile, as a ``by-product" of the above decomposition, a discrete pressure $p^{n+1}$ can be also get for which some investigations have been made to confirm that this ``by-product" pressure is indeed the proper approximation of the exact pressure $p(t_{n+1})$, see \cite{Rannacher-1992-Projection}.\\
\indent For a given timestep $\Delta t$, we consider a uniform discretization of the time interval $[0,T]$ as $0=t_0 < t_1 < \cdots < t_N$ where $N=\lfloor T/\Delta t \rfloor$ is the rounding $T/\Delta t$ down. In mathematics, the above procedure can be interpreted as the following classical Chorin-Teman projection method\cite{Proj-Chorin-1968-MC,Proj-Teman-1968-French}: \\
\indent(\textbf{Scheme 1 - Semi-discrete of classical projection method}) For $n=0,1,\ldots,N-1$, \\
\noindent\textbf{Step 1}- Given $\boldsymbol{u}^n \in \boldsymbol{J}_0$, find $\widetilde{\boldsymbol{u}}^{n+1}\in \boldsymbol{V}$ that satisfies
$$
\frac{\widetilde{\boldsymbol{u}}^{n+1} - \boldsymbol{u}^n}{\Delta t} - \nu\Delta\widetilde{\boldsymbol{u}}^{n+1} = \boldsymbol{f}^{n+1},\quad\text{in}\;\Omega.
$$
\textbf{Step 2}- Given $\widetilde{\boldsymbol{u}}^{n+1}\in \boldsymbol{V}$, compute $(\boldsymbol{u}^{n+1},p^{n+1})\in \boldsymbol{H}_0({\rm div},\Omega)\times \left[Q\cap H^1(\Omega)\right]$ from
$$
\left\{
\begin{aligned}
\boldsymbol{u}^{n+1} + \Delta t\nabla p^{n+1} &= \widetilde{\boldsymbol{u}}^{n+1},\quad\text{in}\;\Omega,\\
\nabla\cdot \boldsymbol{u}^{n+1} &= 0,\quad\text{in}\;\Omega, \\
\boldsymbol{u}^{n+1}\cdot\boldsymbol{n} &= 0,\quad\text{on}\;\partial\Omega.
\end{aligned}
\right.
$$
\indent The usage of classical Helmholtz decomposition is reflected in decomposing the intermediate velocity $\widetilde{\boldsymbol{u}}^{n+1}$ from \textbf{step\ 1} to get a solenoidal velocity $\boldsymbol{u}^{n+1}$ and a gradient of a $H^{1}$-function, which can be stated as:\\
\noindent$\mathbf{Step\ 2^{\prime}}$- Projecting $\widetilde{\boldsymbol{u}}^{n+1}$ onto the space $\boldsymbol{J}_0$
$$
\boldsymbol{u}^{n+1} = P_{\boldsymbol{J}_0}(\widetilde{\boldsymbol{u}}^{n+1}).
$$
\begin{Remark}
Let us explain the relation between $\boldsymbol{Step 2}$ and $\boldsymbol{Step 2^{\prime}}$. Followed by (\ref{HelmDecom}), decomposing $\widetilde{\boldsymbol{u}}^{n+1}$ would directly gets:
\begin{equation}\label{HelmDecom-u}
\widetilde{\boldsymbol{u}}^{n+1} = \boldsymbol{u}^{n+1} + \nabla\phi^{n+1},
\end{equation}
where $\boldsymbol{u}^{n+1}\in \boldsymbol{J}_1$ implies $\nabla\cdot\boldsymbol{u}^{n+1}=0$ and $\boldsymbol{u}^{n+1}\cdot \boldsymbol{n}|_{\partial\Omega}=0$, which is exactly the second and third equations in $\boldsymbol{Step\ 2}$. Multiplying the previous decomposition by $1/\Delta t$ and adding it to the first equation in $\boldsymbol{Step\ 1}$, we get $(\boldsymbol{u}^{n+1}-\boldsymbol{u}^{n})/\Delta t-\nu\Delta \widetilde{\boldsymbol{u}}^{n+1} + \nabla(\phi^{n+1}/\Delta t)=\boldsymbol{f}^{n+1}$, which means $\phi^{n+1}/\Delta t$ can be regarded as a proper approximation of $p(t_{n+1})$, i.e., we can define $p^{n+1}:=\phi^{n+1}/\Delta t$. Thus, on the one hand, we can compute $\phi^{n+1}$ in the former decomposition (\ref{HelmDecom-u}) and using $p^{n+1}:=\phi^{n+1}/\Delta t$ to get $p^{n+1}$; on the other hand, we can also substitute $\phi^{n+1}$ with $\Delta tp^{n+1}$ in (\ref{HelmDecom-u}) to directly compute $p^{n+1}$, which is exactly what the first equation in \textbf{step 2} does.
\end{Remark}
%\vspace{-0.5cm}
Or, Step 2 is equivalent to the following pressure Poisson equation using $\nabla\cdot \boldsymbol{u}^{n+1} = 0$:\\
\noindent$\mathbf{Step\ 2^{\prime\prime}}$- Given $\widetilde{\boldsymbol{u}}^{n+1}\in \boldsymbol{V}$, compute $p^{n+1}\in Q\cap H^2(\Omega)$ from
$$
\left\{
\begin{aligned}
\Delta p^{n+1} &= \frac{1}{\Delta t}\nabla\cdot \widetilde{\boldsymbol{u}}^{n+1},\quad\text{in}\;\Omega,\\
\frac{\partial p^{n+1}}{\partial \boldsymbol{n}} &= 0,\quad\text{on}\;\partial\Omega.
\end{aligned}
\right.
$$
\indent Based on the previous finite element spaces, we derive the fully discrete scheme of projection method.\\
\indent(\textbf{Scheme 2 - Fully-discrete of classical projection scheme}) Let $\boldsymbol{u}^0_h$ be the Lagrangian interpolation or Ritz projection onto $\boldsymbol{V}_h$ of $\boldsymbol{u}_0$, then for $n=0,1,\ldots,N-1$, we can compute $(\widetilde{\boldsymbol{u}}^{n+1}_h, \boldsymbol{u}^{n+1}_h, p^{n+1}_h)\in$ $ \boldsymbol{V}_h\times \boldsymbol{W}_h\times Q_h$ iteratively by
\begin{equation}\label{Pro-Method-1}
\left\{
\begin{aligned}
\left(\frac{\widetilde{\boldsymbol{u}}^{n+1}_h - \boldsymbol{u}^n_h}{\Delta t},\boldsymbol{v}_h\right) + \nu(\nabla\widetilde{\boldsymbol{u}}^{n+1}_h, \nabla \boldsymbol{v}_h) &= \langle\boldsymbol{f}^{n+1},\boldsymbol{v}_h\rangle,\quad \forall \boldsymbol{v}_h\in \boldsymbol{V}_h,\\
(\nabla\cdot \widetilde{\boldsymbol{u}}^{n+1}_h,q_h) + \Delta t(\nabla p^{n+1}_h,\nabla q_h) &= 0,\quad \forall q_h\in Q_h,\\
\boldsymbol{u}^{n+1}_h &= \widetilde{\boldsymbol{u}}^{n+1}_h - \Delta t\cdot\nabla p^{n+1}_h.
\end{aligned}
\right.
\end{equation}
\indent As pointed in \cite{Rannacher-1992-Projection}, it is not necessary to compute the projected velocities $\{\boldsymbol{u}^n_h\}^N_{n=0}$ since these quantities can be eliminated. To see this, replacing $\boldsymbol{u}^n_h$ in the first equation with the third equation at $t_n$ in \textbf{Scheme 3}, we reach the following Proj-PSPG-FE scheme.\\
\indent(\textbf{Scheme 3 - Proj-PSPG-FE})\label{Scheme4}
Let $\boldsymbol{u}^0_h$ be the Lagrangian interpolation or Ritz projection onto $\boldsymbol{V}_h$ of $\boldsymbol{u}_0$ and $p^0_h$ denote some proper approximation of initial pressure $p(t_0)$, we set $\widetilde{\boldsymbol{u}}^{0}_h = \boldsymbol{u}^{0}_h + \Delta t\nabla p^{0}_h$ and replace $\boldsymbol{u}^n_h$ with the third equality in \textbf{Scheme 2}, we obtain the following equivalent scheme: for $n=0,1,\ldots,N-1$, find $(\widetilde{\boldsymbol{u}}^{n+1}_h,p^{n+1}_h)\in \boldsymbol{V}_h\times Q_h$
\begin{equation}\label{Pro-Method-2}
\left\{
\begin{aligned}
\left(\frac{\widetilde{\boldsymbol{u}}^{n+1}_h - \widetilde{\boldsymbol{u}}^n_h}{\Delta t},\boldsymbol{v}_h\right) + \nu(\nabla\widetilde{\boldsymbol{u}}^{n+1}_h, \nabla \boldsymbol{v}_h) + (\nabla p^n_h, \boldsymbol{v}_h) &= \langle\boldsymbol{f}^{n+1},\boldsymbol{v}_h\rangle,\quad \forall \boldsymbol{v}_h\in \boldsymbol{V}_h,\\
(\nabla\cdot \widetilde{\boldsymbol{u}}^{n+1}_h,q_h) + \Delta t(\nabla p^{n+1}_h,\nabla q_h) &= 0,\quad \forall q_h\in Q_h.
\end{aligned}
\right.
\end{equation}

\begin{Remark}\label{RemarkAbandonFirtFewPress}
For the initial discrete pressure $p^0_h$, we simply take $p^0_h=0$ in later numerical experiments, the reasons behind this are twofold: firstly, it can simplify the solving of initial discrete pressure $p^0_h=0$, compared to other techniques, like pressure Poisson equation used in \cite{Heywood-Rannacher-1982-SINUM-1,John-PSPGAnalysis-SIAM-2015}; secondly, it can also directly get the initial intermediate discrete velocity $\widetilde{\boldsymbol{u}}^{0}_h$ since at this point $\widetilde{\boldsymbol{u}}^{0}_h = \boldsymbol{u}^{0}_h$. We may also observe that this rude treatment would make the first few time steps discrete pressure appear oscillation. Indeed, numerical experiments have showed that the first few time steps discrete pressure values have to be abandoned because of big discrete errors; nevertheless, we will see in later numerical experiments that, only after a few time steps(for example, from $n=6$ on), the discrete pressure values can be adopted as the appropriate discrete approximation of the exact pressure.
\end{Remark}
\begin{Remark}
We note that although the obtained finite element velocity solution in scheme 3 is $\widetilde{\boldsymbol{u}}^{n+1}_h$, rather than $\boldsymbol{u}^{n+1}_h$, we will see in latter numerical experiment that when choosing $\boldsymbol{V}_h\times Q_h = P^1$-$P^1$, the solved $\widetilde{\boldsymbol{u}}^{n+1}_h$ can be used as a proper finite element approximation for the exact velocity $\boldsymbol{u}^{n+1}$. If one wants to obtain $\boldsymbol{u}^{n+1}_h$, we can multiply by the last equation in scheme 2 some test function $\boldsymbol{w}_h\in \boldsymbol{W}_h=\boldsymbol{Y}_h^1\cap \boldsymbol{J}_1$ to get
\begin{equation}\label{CompuEndVelo}
(\boldsymbol{u}^{n+1}_h, \boldsymbol{w}_h) = (\widetilde{\boldsymbol{u}}^{n+1}_h, \boldsymbol{w}_h) - \Delta t(\nabla p^{n+1}_h, \boldsymbol{w}_h).
\end{equation}
then, solving above equation can finally obtain $\boldsymbol{u}^{n+1}_h\in \boldsymbol{W}_h=\boldsymbol{Y}_h^1\cap \boldsymbol{J}_1$.
\end{Remark}
%\vspace{-0.5cm}
We observe that, in the classical projection scheme (\ref{Pro-Method-1}) or (\ref{Pro-Method-2}), if we set $\Delta t=\mathcal{O}(h^2)$, the existence of PSPG-stabilized term $\Delta t(\nabla p^{n+1}_h,\nabla q_h)$ makes the spatial convergence order of the error can not achieve more than second order in $L^2$ norm of the velocity and first order of the error in the $L^2$ norm of the pressure, so $P^1$-$P^1$ for finite element spaces and $(\boldsymbol{u},p) \in \boldsymbol{H}^2\times H^1$ seem to be the best choice concerning about the efficiency of computation and the regularity for the exact solutions. Thus, We set $l=1$ in the definition of $Y^l_h$, thus the finite element spaces for velocity and pressure would be $\boldsymbol{V}_h=Y^1_h\cap \boldsymbol{V}$ and $Q_h=Y^1_h\cap Q$, respectively.\\
\indent In the sequel we assume $\Delta t = \mathcal{O}(h^2)$. Specifically, we assume there exist two positive constants $C_1,C_2$, such that
\begin{equation}\label{Delt-h2}
C_1h^2 \leq \Delta t \leq C_2h^2.
\end{equation}
\indent The following modified inf-sup condition relaxes the classical inf-sup condition and makes many mixed finite element pairs ``stable" in the sense of this modified one, whose detailed proof can be found in \cite[Lemma 3]{PSPG-Burman-2011-CMAME} or \cite[Lemma 2.1]{John-PSPGAnalysis-SIAM-2015}.
\begin{Lemma}\label{Modi-infsup}
(\textbf{modified inf-sup condition}) Assuming (\ref{Delt-h2}) holds, then for $n=0,1,\ldots,N-1$, we have the following pressure stability
\begin{equation}
\beta\|q_h\|_0 \leq Ch\|\nabla q_h\|_0 + \sup_{\boldsymbol{v}_h\in \boldsymbol{V}_h}\frac{(q_h,\nabla\cdot \boldsymbol{v}_h)}{\|\boldsymbol{v}_h\|_1},\quad\forall q_h\in Q_h.
\end{equation}
\end{Lemma}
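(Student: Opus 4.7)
The plan is to establish the inequality via a Fortin-type argument that combines the continuous inf-sup condition on $\boldsymbol{V}\times Q$ with a standard quasi-interpolation into $\boldsymbol{V}_h$, and then absorb the discrepancy introduced by interpolation into the $Ch\|\nabla q_h\|_0$ term. The key observation is that the modified inf-sup condition itself does not really require $\Delta t = \mathcal{O}(h^2)$; that hypothesis enters only when it is coupled with the PSPG stabilization term $\Delta t(\nabla p_h,\nabla q_h)$ in Scheme 3. So my argument will be purely spatial.

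First, fix an arbitrary $q_h\in Q_h\subset L^2_0(\Omega)$. By the continuous (Bogovskii/Nečas) inf-sup condition on $\boldsymbol{V}\times Q$, there exists $\boldsymbol{v}\in \boldsymbol{H}^1_0(\Omega)$ such that $\nabla\cdot\boldsymbol{v}=q_h$ and $\|\boldsymbol{v}\|_1\leq C\beta^{-1}\|q_h\|_0$. Second, let $\boldsymbol{v}_h:=I_h\boldsymbol{v}\in \boldsymbol{V}_h$ be a Scott--Zhang (or Clément) type interpolant preserving homogeneous boundary conditions, so that the standard interpolation bound $\|\boldsymbol{v}-I_h\boldsymbol{v}\|_0\leq Ch\|\boldsymbol{v}\|_1$ and the $H^1$-stability $\|I_h\boldsymbol{v}\|_1\leq C\|\boldsymbol{v}\|_1$ hold (these are analogous to (\ref{Inte-Opti-Boun}) and (\ref{H1Stab-Jh}) for the velocity interpolant).

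Third, write
\[
\|q_h\|_0^2 = (q_h,\nabla\cdot \boldsymbol{v}) = (q_h,\nabla\cdot \boldsymbol{v}_h) + (q_h,\nabla\cdot(\boldsymbol{v}-\boldsymbol{v}_h)),
\]
and integrate the last term by parts. Since $\boldsymbol{v}-\boldsymbol{v}_h$ vanishes on $\partial\Omega$, no boundary term appears, and we obtain $(q_h,\nabla\cdot(\boldsymbol{v}-\boldsymbol{v}_h)) = -(\nabla q_h,\boldsymbol{v}-\boldsymbol{v}_h)$. Applying Cauchy--Schwarz and the interpolation estimate yields
\[
|(q_h,\nabla\cdot(\boldsymbol{v}-\boldsymbol{v}_h))| \leq Ch\|\nabla q_h\|_0\,\|\boldsymbol{v}\|_1,
\]
whereas the first term is bounded by
\[
(q_h,\nabla\cdot \boldsymbol{v}_h) \leq \|\boldsymbol{v}_h\|_1 \sup_{\boldsymbol{w}_h\in \boldsymbol{V}_h}\frac{(q_h,\nabla\cdot \boldsymbol{w}_h)}{\|\boldsymbol{w}_h\|_1} \leq C\|\boldsymbol{v}\|_1 \sup_{\boldsymbol{w}_h\in \boldsymbol{V}_h}\frac{(q_h,\nabla\cdot \boldsymbol{w}_h)}{\|\boldsymbol{w}_h\|_1}.
\]
Combining the two bounds, dividing by $\|q_h\|_0$, and using $\|\boldsymbol{v}\|_1\leq C\beta^{-1}\|q_h\|_0$ produces exactly the claimed inequality after renaming constants.

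The only delicate step is making sure the interpolant $I_h$ maps into $\boldsymbol{V}_h$ while preserving the homogeneous Dirichlet data, which is needed both for the integration by parts and to guarantee $\boldsymbol{v}_h$ is a legitimate test function in the discrete supremum; this is precisely why the Scott--Zhang (rather than nodal Lagrange) interpolant must be invoked, since $\boldsymbol{v}$ is only in $\boldsymbol{H}^1_0$ and has no pointwise values in general. Beyond that, the remaining estimates are routine, and I would simply refer the reader to \cite{PSPG-Burman-2011-CMAME} or \cite{John-PSPGAnalysis-SIAM-2015} for the original detailed proof.
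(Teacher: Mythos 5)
Your argument is correct and is essentially the proof of this lemma: the paper itself supplies no proof, deferring to \cite{PSPG-Burman-2011-CMAME} and \cite{John-PSPGAnalysis-SIAM-2015}, and the proofs there proceed by exactly your Fortin-type route (continuous inf-sup to get $\boldsymbol{v}$ with $\nabla\cdot\boldsymbol{v}=q_h$, a boundary-condition-preserving quasi-interpolant, and integration by parts to absorb the interpolation error into the $Ch\|\nabla q_h\|_0$ term). Your side remark that the hypothesis $\Delta t=\mathcal{O}(h^2)$ plays no role in this purely spatial inequality and only matters when the lemma is combined with the $\Delta t(\nabla p_h,\nabla q_h)$ stabilization term is also accurate.
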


\subsection{convergence of classical projection scheme}
In this subsection, we will cite some error estimation results of the finite element solution in the classical projection scheme and omit those proofs, since those estimates have been analyzed in \cite{Frutos-ModifiedProjection-2018-AMC} in detail. The necessity of citing is based on the fact that the following POD snapshots are the finite element solutions, which means that when we analyze the discretization error of the POD-based reduced-order solutions, apart from the ROM truncation error, the discretization error also involve spatial and temporal discretization errors.

\begin{Lemma}\label{Lemma-FE-ConvAnalyVeloPress}
(\textbf{error estimates for Proj-PSPG-FE}) For $n=1,\cdots, N$, Let $(\boldsymbol{u}^{n},p^{n})\in \boldsymbol{X}\times Q$ be the solution of continuous variational form (\ref{ContVari}) at discrete time $t=t_n$, $(\widetilde{\boldsymbol{u}}^{n}_h,p^{n}_h)\in \boldsymbol{X}_h\times Q_h$ is the solution obtained with the PSPG-like projection scheme (\ref{Pro-Method-2}), and assuming (\ref{Delt-h2}) holds, i.e. $\Delta t = \mathcal{O}(h^2)$, then we have
\begin{equation}\label{Equa-FE-ConvAnalyVeloPress}
\begin{aligned}
\|\boldsymbol{u}^n - \widetilde{\boldsymbol{u}}^n_h\|_0 + h\|p^n - p^n_h\|_0 + h\sqrt{\Delta t}\|\nabla(p^n - p^n_h)\|_0  &\leq C(h^2 + \Delta t). \\
\|\nabla(\boldsymbol{u} - \boldsymbol{\widetilde{u}}_h)\|_{l^2(L^2)} + \|p-p_h\|_{l^2(L^2)} &\leq C(h+\sqrt{\Delta t}).
\end{aligned}
\end{equation}
\end{Lemma}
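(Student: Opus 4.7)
The plan is to carry out a standard energy/error analysis for mixed discretizations of parabolic saddle-point problems, with two non-standard features that must be accommodated: the equal-order pair $\boldsymbol{V}_h \times Q_h$ (forcing the use of the modified inf-sup Lemma \ref{Modi-infsup} in place of the classical one), and the time-shifted pressure coupling $(\nabla p^n_h, \boldsymbol{v}_h)$ in the momentum step of (\ref{Pro-Method-2}). First I would split each error through the interpolants $I_h, J_h$ of (\ref{Inte-Opti-Boun})--(\ref{H1Stab-Jh}), writing
\begin{equation*}
\boldsymbol{u}^n - \widetilde{\boldsymbol{u}}^n_h = (\boldsymbol{u}^n - I_h \boldsymbol{u}^n) + (I_h \boldsymbol{u}^n - \widetilde{\boldsymbol{u}}^n_h) =: \boldsymbol{\rho}^n - \boldsymbol{\theta}^n_h, \qquad p^n - p^n_h = \sigma^n - \xi^n_h,
\end{equation*}
so that the interpolation pieces $\boldsymbol{\rho}^n, \sigma^n$ are controlled directly by (\ref{Inte-Opti-Boun}) with $l=1$, and the whole analytical work reduces to bounding the discrete pieces $\boldsymbol{\theta}^n_h \in \boldsymbol{V}_h$, $\xi^n_h \in Q_h$.

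Next I would subtract (\ref{Pro-Method-2}) from the continuous formulation (\ref{ContVari}) evaluated at $t = t_{n+1}$ against the same test pair $(\boldsymbol{v}_h, q_h) \in \boldsymbol{V}_h \times Q_h$. This produces a coupled system whose right-hand side contains three sources of error: (i) the backward-Euler truncation $\boldsymbol{\tau}^{n+1} = \partial_t \boldsymbol{u}(t_{n+1}) - (\boldsymbol{u}^{n+1} - \boldsymbol{u}^n)/\Delta t$, which is $O(\Delta t)$ in $L^2$ by Taylor expansion assuming enough smoothness of $\boldsymbol{u}$; (ii) interpolation residuals driven by $\boldsymbol{\rho}^{n+1}, \sigma^{n+1}$; and (iii) the crucial consistency shift $(\nabla(p^{n+1} - p^n), \boldsymbol{v}_h)$ of order $O(\Delta t)$ that compensates the discrete scheme's use of $p^n_h$ rather than $p^{n+1}_h$ in the momentum equation.

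I would then test the momentum error equation with $\boldsymbol{v}_h = 2 \Delta t\, \boldsymbol{\theta}^{n+1}_h$ and the pressure error equation with $q_h = 2 \Delta t\, \xi^{n+1}_h$, using the telescoping identity $2(a-b, a) = \|a\|^2 - \|b\|^2 + \|a-b\|^2$, and summing over $n = 0, \ldots, N-1$. This yields a discrete energy inequality with coercive left-hand contributions
\begin{equation*}
\|\boldsymbol{\theta}^N_h\|_0^2 \;+\; \nu \Delta t \sum_{n=0}^{N-1}\|\nabla \boldsymbol{\theta}^{n+1}_h\|_0^2 \;+\; \Delta t^2 \sum_{n=0}^{N-1}\|\nabla \xi^{n+1}_h\|_0^2 ,
\end{equation*}
where the right-hand side is absorbed via Cauchy--Schwarz/Young, the inverse inequality (\ref{Inve-Ineq}), the Sobolev embedding (\ref{SobolevImbed}), and a discrete Gronwall step. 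This produces the stabilized estimate $h\sqrt{\Delta t}\,\|\nabla(p^n - p^n_h)\|_0 \le C(h^2+\Delta t)$ together with the $L^\infty(L^2)$ velocity bound $\|\boldsymbol{u}^n - \widetilde{\boldsymbol{u}}^n_h\|_0 \le C(h^2+\Delta t)$ and the $\ell^2(H^1)$ velocity bound $\|\nabla(\boldsymbol{u}-\widetilde{\boldsymbol{u}}_h)\|_{\ell^2(L^2)} \le C(h+\sqrt{\Delta t})$. The $L^2$ pressure bound is then recovered by applying Lemma \ref{Modi-infsup} to $\xi^{n+1}_h$: the term $h\|\nabla \xi^{n+1}_h\|_0$ is already controlled by the stabilization, and the supremum term is handled by using the momentum error equation to rewrite $(\xi^{n+1}_h, \nabla \cdot \boldsymbol{v}_h)$ in terms of already-controlled quantities; after squaring and summing in time, one obtains $\|p-p_h\|_{\ell^2(L^2)} \le C(h+\sqrt{\Delta t})$ and the pointwise-in-time bound $h\|p^n - p^n_h\|_0 \le C(h^2 + \Delta t)$.

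The main obstacle will be the delicate handling of the time-shifted pressure coupling: the momentum error equation contains $(\nabla(p^{n+1} - p^n_h), \boldsymbol{v}_h)$, and splitting it as $(\nabla(p^{n+1}-p^n),\boldsymbol{v}_h) + (\nabla(p^n - p^n_h),\boldsymbol{v}_h)$ leaves a genuine pressure-error term that must be balanced against the stabilization contribution $\Delta t\|\nabla \xi^{n+1}_h\|_0^2$ without deteriorating the rate. Making this balance work \emph{precisely} under the scaling $\Delta t = \mathcal{O}(h^2)$ in (\ref{Delt-h2}) — so that the inverse-inequality factors exactly cancel and the optimal $O(h^2+\Delta t)$ velocity estimate survives — is the subtle step and the reason Frutos et al. \cite{Frutos-ModifiedProjection-2018-AMC} carry the argument out so carefully.
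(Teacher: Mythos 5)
First, a point of comparison: the paper does not actually prove this lemma. The authors explicitly state that they ``cite some error estimation results \dots and omit those proofs, since those estimates have been analyzed in \cite{Frutos-ModifiedProjection-2018-AMC} in detail.'' So there is no in-paper argument to measure your sketch against; what can be assessed is whether your outline would deliver the stated rates, and there I see one genuine gap.

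Your framework (interpolant splitting, subtraction of (\ref{Pro-Method-2}) from (\ref{ContVari}), energy testing with $2\Delta t\,\boldsymbol{\theta}^{n+1}_h$ and $2\Delta t\,\xi^{n+1}_h$, then Lemma \ref{Modi-infsup} for the pressure) is the right skeleton, but you have misidentified the delicate term. The lagged pressure contribution $(\nabla(p^{n+1}-p^n),\boldsymbol{v}_h)$ that you single out is in fact benign: tested against $2\Delta t\,\boldsymbol{\theta}^{n+1}_h$ it is of size $C\Delta t^{2}\|\boldsymbol{\theta}^{n+1}_h\|_0$ per step and contributes only $O(\Delta t^{2})$ to the squared energy after summation and Gronwall. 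The real obstruction is the non-consistency of the inherent stabilization: the exact solution satisfies $(\nabla\cdot\boldsymbol{u}^{n+1},q_h)=0$ but not the stabilized equation, so the error equation for the divergence constraint carries the residual $\Delta t(\nabla p^{n+1},\nabla q_h)$. With $q_h=2\Delta t\,\xi^{n+1}_h$ and a direct Cauchy--Schwarz/Young split, this term costs $2\Delta t^{2}\|\nabla p^{n+1}\|_0^{2}$ per step, hence $O(\Delta t)$ (not $O(\Delta t^{2})$) in the summed squared energy, because $\|\nabla\xi^{n+1}_h\|_0$ is only $O(1)$. That yields $\|\boldsymbol{u}^n-\widetilde{\boldsymbol{u}}^n_h\|_0\le C(h^2+\sqrt{\Delta t})$, the classical $\sqrt{\Delta t}$ barrier of the non-incremental Chorin--Temam scheme, and falls short of the claimed first inequality in (\ref{Equa-FE-ConvAnalyVeloPress}). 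Closing this gap requires an additional idea that your sketch does not contain: either a duality/negative-norm estimate of the splitting error, or the device used in \cite{Frutos-ModifiedProjection-2018-AMC} of re-inserting the discrete divergence equation (equivalently the relation $\boldsymbol{u}^{n+1}_h=\widetilde{\boldsymbol{u}}^{n+1}_h-\Delta t\nabla p^{n+1}_h$) to convert the offending $\Delta t(\nabla p^{n+1},\nabla\xi^{n+1}_h)$ pairing into an $L^2$ velocity-error pairing that Gronwall can absorb at full order. Without that step, the second estimate of order $h+\sqrt{\Delta t}$ survives, but the pointwise-in-time $O(h^2+\Delta t)$ velocity and pressure bounds do not follow from the argument as written.
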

In addition to the error estimates about the intermediate velocity $\widetilde{\boldsymbol{u}}^n_h$ analyzed in \cite{Frutos-ModifiedProjection-2018-AMC}, we can also prove the $L^2$ error estimates about the end-of-step velocity $\boldsymbol{u}^n_h$ which is not made in \cite{Frutos-ModifiedProjection-2018-AMC}. In other words, we can obtain
\begin{Lemma}\label{Lemma-FEL2EndVelo}
For $n=1,\cdots, N$, $\boldsymbol{u}^n_h$ is the velocity solution obtained in Scheme 2, and $\boldsymbol{u}^{n}$ is the velocity solution of continuous variational form (\ref{ContVari}) at discrete time $t=t_n$, then
\begin{equation}
\|\boldsymbol{u}^n - \boldsymbol{u}^n_h\|_0 \leq C(h^2 + \Delta t).
\end{equation}
\end{Lemma}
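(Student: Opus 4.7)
\textbf{Proof plan for Lemma \ref{Lemma-FEL2EndVelo}.}

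The plan is to compare $\boldsymbol{u}^{n}_{h}$ to $\boldsymbol{u}^{n}$ through the intermediate velocity $\widetilde{\boldsymbol{u}}^{n}_{h}$, exploiting the algebraic identity given by the third equation of Scheme 2. Writing
\begin{equation*}
\boldsymbol{u}^{n} - \boldsymbol{u}^{n}_{h} \;=\; \bigl(\boldsymbol{u}^{n} - \widetilde{\boldsymbol{u}}^{n}_{h}\bigr) + \Delta t\,\nabla p^{n}_{h}
\;=\; \bigl(\boldsymbol{u}^{n} - \widetilde{\boldsymbol{u}}^{n}_{h}\bigr) + \Delta t\,\nabla p^{n} + \Delta t\,\nabla(p^{n}_{h} - p^{n}),
\end{equation*}
the triangle inequality splits the target norm into three pieces that can be controlled separately.

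The first piece $\|\boldsymbol{u}^{n} - \widetilde{\boldsymbol{u}}^{n}_{h}\|_{0}$ is already bounded by $C(h^{2}+\Delta t)$ thanks to the first estimate in Lemma \ref{Lemma-FE-ConvAnalyVeloPress}. The second piece $\Delta t\,\|\nabla p^{n}\|_{0}$ is $\mathcal{O}(\Delta t)$ under the standard regularity hypothesis $p\in L^{\infty}(0,T;H^{1}(\Omega))$ on the continuous pressure (the same regularity implicitly used to derive the bounds in Lemma \ref{Lemma-FE-ConvAnalyVeloPress}). The third piece is the delicate one: from the bound $h\sqrt{\Delta t}\,\|\nabla(p^{n}-p^{n}_{h})\|_{0}\leq C(h^{2}+\Delta t)$ in Lemma \ref{Lemma-FE-ConvAnalyVeloPress} we obtain
\begin{equation*}
\Delta t\,\|\nabla(p^{n}_{h}-p^{n})\|_{0} \;\leq\; \frac{C\sqrt{\Delta t}}{h}\,(h^{2}+\Delta t),
\end{equation*}
and then the scaling assumption (\ref{Delt-h2}), which yields $\sqrt{\Delta t}/h\leq C$, absorbs the potentially dangerous factor and gives a bound of the form $C(h^{2}+\Delta t)$.

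Combining the three estimates concludes the proof. The main obstacle, as just indicated, is the third piece: the PSPG-type gradient-of-pressure error estimate in Lemma \ref{Lemma-FE-ConvAnalyVeloPress} carries the weight $h\sqrt{\Delta t}$ rather than $\Delta t$, so a priori the factor $\Delta t$ in front of $\nabla(p^{n}_{h}-p^{n})$ is not automatically enough to absorb it. The crucial observation is that the hypothesis $\Delta t=\mathcal{O}(h^{2})$ implies $\sqrt{\Delta t}\lesssim h$, which exactly compensates the missing factor and restores the optimal rate. No induction or Gronwall argument is needed here; the result follows pointwise in $n$ once Lemma \ref{Lemma-FE-ConvAnalyVeloPress} is granted.
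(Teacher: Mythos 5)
Your proposal is correct and follows essentially the same route as the paper: both arguments reduce the problem to bounding $\Delta t\,\|\nabla p^{n}_{h}\|_{0}$ via the identity $\boldsymbol{u}^{n}_{h}=\widetilde{\boldsymbol{u}}^{n}_{h}-\Delta t\,\nabla p^{n}_{h}$, split $\nabla p^{n}_{h}$ into $\nabla p^{n}$ plus $\nabla(p^{n}_{h}-p^{n})$, and use the weighted gradient-of-pressure estimate of Lemma \ref{Lemma-FE-ConvAnalyVeloPress} together with the scaling $\Delta t=\mathcal{O}(h^{2})$ to absorb the factor $\sqrt{\Delta t}/h$. The only cosmetic difference is that the paper packages the last two pieces as the uniform stability bound $\|\nabla p^{n}_{h}\|_{0}\leq C$ before multiplying by $\Delta t$, and invokes the weak form (\ref{CompuEndVelo}) rather than the pointwise identity, which changes nothing essential.
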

\begin{proof}
According to the convergent results given by Lemma \ref{Lemma-FE-ConvAnalyVeloPress}, we can prove the stability of $\|\nabla p^n_h\|_0$. That is, the first inequality in (\ref{Equa-FE-ConvAnalyVeloPress}) implies
$$
\|\nabla(p^n - p^n_h)\|_0 \leq C(h/\sqrt{\Delta t} + \sqrt{\Delta t}/h) \leq C,
$$
where we have used (\ref{Delt-h2}). Then
$$
\|\nabla p^n_h\|_0 \leq \|\nabla(p^n - p^n_h)\|_0 + \|\nabla p^n\|_0 \leq C,
$$
where the constant $C$ can be chose to stay nearly fixed with the decrease of $h$ and $\Delta t$. So, based on the result, testing $\boldsymbol{w}_h = \widetilde{\boldsymbol{u}}^{n+1}_h - \boldsymbol{u}^{n+1}_h$ in (\ref{CompuEndVelo}) and rearranging to get
$$
\|\widetilde{\boldsymbol{u}}^{n+1}_h - \boldsymbol{u}^{n+1}_h\|_0 \leq \Delta t\|\nabla p^{n+1}_h\|_0 \leq C\Delta t.
$$
Finally, utilizing triangle inequality and Lemma \ref{Lemma-FE-ConvAnalyVeloPress},
$$
\|\boldsymbol{u}^{n+1} - \boldsymbol{u}^{n+1}_h\|_0 \leq \|\boldsymbol{u}^{n+1} - \widetilde{\boldsymbol{u}}^{n+1}_h\|_0 + \|\widetilde{\boldsymbol{u}}^{n+1}_h - \boldsymbol{u}^{n+1}_h\|_0 \leq C(h^2 + \Delta t).
$$
\end{proof}

\section{Projection POD}
\subsection{POD}
\noindent In this subsection, we will briefly give some essential ingredients for constructing POD method, more details for constructing POD space and POD-based reduced-order models can be found in \cite{Luo-POD,Galerkin-POD-Volkwein-LectureNotes-2011}. As we remarked in Remark \ref{RemarkAbandonFirtFewPress}, the numerical pressure oscillation in the first few time steps caused by setting $p^0_h=0$ makes us have to discard the finite element solutions in the previous time steps, so when using finite element solution obtained by solving Scheme 4 to construct POD spaces, we also need to consider this situations. In view of the fact that the different choice of initial pressure values will affect the discrete error of finite element pressure in the previous time steps, we uniformly denote $n=n_0,n_0\geq 1$ as the starting time steps of taking snapshots in Scheme 4; therefore, for a given positive integer $M$, we choose the finite element solution $(\widetilde{\boldsymbol{u}}^i_h,p^i_h),\;n_0\leq i\leq n_0+M-1$ in Scheme 4, and its difference quotients(DQs) $(\partial \widetilde{\boldsymbol{u}}^i_h,\partial p^i_h),\;n_0+1\leq i\leq n_0+M-1$, where the DQs $\partial f^i$ are defined by $\partial f^i := (f^i - f^{i-1})/\Delta t$ for some function $f$ at discrete time $t=t_i$, to formulate the snapshot spaces
$$
\begin{aligned}
\widetilde{\mathcal{U}} &:= \langle \widetilde{\boldsymbol{u}}^{n_0}_h,\widetilde{\boldsymbol{u}}^{n_0+1}_h,\cdots,\widetilde{\boldsymbol{u}}^{M+n_0-1}_h, \partial\widetilde{\boldsymbol{u}}^{n_0+1}_h, \partial\widetilde{\boldsymbol{u}}^{n_0+2}_h, \cdots, \partial\widetilde{\boldsymbol{u}}^{M+n_0-1}_h\rangle, \\
\mathcal{P} &:= \langle p^{n_0}_h,p^{n_0+1}_h,\cdots,p^{M+n_0-1}_h, \partial p^{n_0+1}_h, \partial p^{n_0+2}_h, \cdots, \partial p^{M+n_0+1}_h \rangle.
\end{aligned}
$$
where $\langle S \rangle$ denotes the space spanned by the set $S$ and we denote by $N_s=2M-1$ the number of snapshots. Let $d_{\widetilde{u}},d_p$ be the dimensions of the spaces $\widetilde{\mathcal{U}}$ and $\mathcal{P}$, respectively. The symbols $K_{\widetilde{u}},K_p$ are correlation matrices corresponding to the snapshots $K_{\widetilde{u}}=\left((K^{\widetilde{u}}_{i,j})\right)\in \mathbb{R}^{N_s\times N_s}$ and $K_p=\left((K^p_{i,j})\right)\in \mathbb{R}^{N_s\times N_s}$ where
$$
K^{\widetilde{u}}_{i,j} := \left(\widetilde{\boldsymbol{u}}^i_h,\widetilde{\boldsymbol{u}}^j_h\right), \quad K^p_{i,j} := \left(p^i_h,p^j_h\right).
$$
and $(\cdot,\cdot)$ is the $L^2$ inner product. Just as in \cite{Galerkin-POD-NM-2001}, a singular value decomposition(SVD) is carried out and the leading generalized eigenfunctions are chosen as bases, referred to as the POD bases. We denote by $\lambda_1\geq \lambda_2\geq \cdots \lambda_{d_{\widetilde{u}}}>0$ the positive eigenvalues of $K_{\widetilde{u}}$ and by $\boldsymbol{x}_1,\boldsymbol{x}_2,\ldots,\boldsymbol{x}_{d_{\widetilde{u}}}\in \mathbb{R}^{N_s}$ the associated eigenvectors. Analogously, $\{\gamma_i,\boldsymbol{y}_i\}^{d_p}_{i=1}$ are the eigen-pairs of $K_p$. Then, the two POD bases can be written explicitly as
\begin{equation}\label{POD-bases}
%\left\{
\begin{aligned}
\boldsymbol{\varphi}_i &= \frac{1}{\sqrt{\lambda_i}}\left(\sum^{n_0+M-1}_{j=n_0}x^j_i \widetilde{\boldsymbol{u}}^j_h + \sum^{n_0+M-1}_{j=n_0+1}x^{j+M-1}_i \partial\widetilde{\boldsymbol{u}}^{j}_h\right),\;&\text{for}\;i=1,2,\cdots,d_{\widetilde{u}},\\
\psi_i &= \frac{1}{\sqrt{\gamma_i}}\left(\sum^{n_0+M-1}_{j=n_0}y^j_i p^j_h + \sum^{n_0+M-1}_{j=n_0}y^{j+M-1}_i \partial p^{j}_h\right),\;&\text{for}\;i=1,2,\cdots,d_{p}.
\end{aligned}
%\right.
\end{equation}
In what follows we will denote by
$$
\begin{aligned}
\boldsymbol{V}_r = \langle \boldsymbol{\varphi}_1,\boldsymbol{\varphi}_2,\cdots,\boldsymbol{\varphi}_r\rangle,\quad Q_r = \langle \psi_1,\psi_2,\cdots,\psi_r\rangle.
\end{aligned}
$$
i.e., we choose the first $r$ POD bases to span the POD spaces, and define the traditional $L^2$ projection into the POD velocity space $\boldsymbol{V}_r$ and POD pressure space $Q_r$ as
\begin{Definition}
Let $\Pi^v_r:L^2(\Omega)\rightarrow \boldsymbol{V}_r$ and $\Pi^q_r:L^2(\Omega)\rightarrow Q_r$ such that
$$
\begin{aligned}
(\boldsymbol{u} - \Pi^v_r\boldsymbol{u}, \boldsymbol{v}_r) &= 0 \quad \forall \boldsymbol{v}_r\in \boldsymbol{V}_r \quad and \\
(p - \Pi^q_rp, q_r) &= 0 \quad \forall q_r\in Q_r.
\end{aligned}
$$
\end{Definition}
Then, we can get the following Proj-POD-ROM scheme: \vspace{0.2cm}\\
\indent(\textbf{Scheme 4 - Proj-POD-ROM}) For some $n_0\geq 1$, let $(\widetilde{\boldsymbol{u}}^{n_0}_r,p^{n_0}_r)=(\Pi^v_r \widetilde{\boldsymbol{u}}^{n_0}_h,\Pi^q_rp^{n_0}_h)=(\sum^r_{i=1}(\widetilde{\boldsymbol{u}}^{n_0}_h,\boldsymbol{\varphi}_i)\boldsymbol{\varphi}_i$, $\sum^r_{i=1}(p^{n_0}_h,\psi_i)\psi_i)$ be the Galerkin projection of $(\widetilde{\boldsymbol{u}}^{n_0}_h,p^{n_0}_h)$ onto POD spaces $\boldsymbol{V}_r\times Q_r$, then for $n=n_0,n_0+1,n_0+2,\cdots,N-1$, we can compute $(\widetilde{\boldsymbol{u}}^{n+1}_r, p^{n+1}_r)\in \boldsymbol{V}_r\times Q_r$ from
\begin{equation}\label{Pro-POD}
\left\{
\begin{aligned}
\left(\frac{\widetilde{\boldsymbol{u}}^{n+1}_r - \widetilde{\boldsymbol{u}}^n_r}{\Delta t},\boldsymbol{v}_r\right) + \nu(\nabla\widetilde{\boldsymbol{u}}^{n+1}_r, \nabla \boldsymbol{v}_r) + (\nabla p^n_r, \boldsymbol{v}_r) &= \langle\boldsymbol{f}^{n+1},\boldsymbol{v}_r\rangle,\quad \forall \boldsymbol{v}_r\in \boldsymbol{V}_r,      \\
(\nabla\cdot \widetilde{\boldsymbol{u}}^{n+1}_r,q_r) + \Delta t(\nabla p^{n+1}_r,\nabla q_r) &= 0,\quad \forall q_r\in Q_r.
\end{aligned}
\right.
\end{equation}
\subsection{stability and convergence analysis of projection-POD scheme}
\indent We will carry on some numerical analysis about the newly proposed Proj-POD-ROM. To this end, we first recall the classical conclusions concerning about the projection error between the snapshots solution and POD-based reduced-order solution(see \cite{Galerkin-POD-NM-2001})
\begin{Lemma}
(\textbf{time-averaged projection error estimates}) We have the following time-averaged projection error estimates.
\begin{equation}
\begin{aligned}
\frac{1}{{N_s}}\sum^{n_0+M-1}_{j=n_0}\left\|\widetilde{\boldsymbol{u}}^j_h - \sum^{r}_{k=1}\left(\widetilde{\boldsymbol{u}}^j_h,\boldsymbol{\varphi}_k\right)\boldsymbol{\varphi}_k\right\|^2_0 + \frac{1}{{N_s}}\sum^{n_0+M-1}_{j=n_0+1}\left\|\partial\widetilde{\boldsymbol{u}}^j_h - \sum^{r}_{k=1}\left(\partial\widetilde{\boldsymbol{u}}^j_h,\boldsymbol{\varphi}_k\right)\boldsymbol{\varphi}_k\right\|^2_0 &= \sum^{d_{\widetilde{u}}}_{k=r+1}\lambda_k, \\
\frac{1}{{N_s}}\sum^{n_0+M-1}_{j=n_0}\left\|p^j_h - \sum^{r}_{k=1}\left(p^j_h,\psi_k\right)\phi_k\right\|^2_0 + \frac{1}{{N_s}}\sum^{n_0+M-1}_{j=n_0+1}\left\|\partial p^j_h - \sum^{r}_{k=1}\left(\partial p^j_h,\psi_k\right)\phi_k\right\|^2_0 &= \sum^{d_p}_{k=r+1}\gamma_k.
\end{aligned}
\end{equation}
\end{Lemma}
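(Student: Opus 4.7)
The plan is to recognize this as the standard snapshot POD identity, applied to the pooled snapshot family consisting of both velocity iterates and their difference quotients (and likewise for pressure). I will treat the two estimates in parallel since they are structurally identical, focusing on the velocity identity.

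First I would relabel the combined snapshot set: set $\boldsymbol{y}_j := \widetilde{\boldsymbol{u}}^{n_0+j-1}_h$ for $j=1,\ldots,M$ and $\boldsymbol{y}_{M+j} := \partial \widetilde{\boldsymbol{u}}^{n_0+j}_h$ for $j=1,\ldots,M-1$, so that $\{\boldsymbol{y}_j\}_{j=1}^{N_s}$ with $N_s=2M-1$ is the full snapshot collection. The correlation matrix $K_{\widetilde{u}}$ becomes $(K_{\widetilde{u}})_{ij}=(\boldsymbol{y}_i,\boldsymbol{y}_j)$ by construction. The spectral theorem yields an orthonormal eigenbasis $\boldsymbol{x}_1,\ldots,\boldsymbol{x}_{N_s}$ of $\mathbb{R}^{N_s}$ with $K_{\widetilde{u}}\boldsymbol{x}_k=\lambda_k\boldsymbol{x}_k$, where only the first $d_{\widetilde{u}}$ eigenvalues are positive.

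The key algebraic step is to verify from the definition \eqref{POD-bases} of $\boldsymbol{\varphi}_k$ that the POD modes form an $L^2$-orthonormal system and satisfy the pair of SVD relations
$$
(\boldsymbol{\varphi}_k,\boldsymbol{\varphi}_\ell)=\delta_{k\ell},\qquad (\boldsymbol{y}_j,\boldsymbol{\varphi}_k)=\sqrt{\lambda_k}\,x_k^{j},\qquad \boldsymbol{y}_j=\sum_{k=1}^{d_{\widetilde{u}}}\sqrt{\lambda_k}\,x_k^{j}\boldsymbol{\varphi}_k.
$$
The first relation follows by substituting the definition of $\boldsymbol{\varphi}_k$, using $(\boldsymbol{y}_i,\boldsymbol{y}_j)=(K_{\widetilde{u}})_{ij}$, the eigenrelation $K_{\widetilde{u}}\boldsymbol{x}_\ell=\lambda_\ell\boldsymbol{x}_\ell$, and the orthonormality $\boldsymbol{x}_k\cdot\boldsymbol{x}_\ell=\delta_{k\ell}$. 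The second relation comes from the same calculation applied to $(\boldsymbol{y}_j,\boldsymbol{\varphi}_k)$. The third expansion then follows from completeness of $\{\boldsymbol{x}_k\}_{k=1}^{N_s}$ in $\mathbb{R}^{N_s}$ restricted to the range of $K_{\widetilde{u}}$.

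With these in hand, I would combine Bessel's identity and the SVD expansion: for each $j$,
$$
\Bigl\|\boldsymbol{y}_j-\sum_{k=1}^{r}(\boldsymbol{y}_j,\boldsymbol{\varphi}_k)\boldsymbol{\varphi}_k\Bigr\|_0^2
=\sum_{k=r+1}^{d_{\widetilde{u}}}|(\boldsymbol{y}_j,\boldsymbol{\varphi}_k)|^2
=\sum_{k=r+1}^{d_{\widetilde{u}}}\lambda_k\,|x_k^{j}|^2.
$$
Summing over $j=1,\ldots,N_s$ and invoking $\|\boldsymbol{x}_k\|_{\mathbb{R}^{N_s}}^2=1$ collapses the inner sum and yields $\sum_{k=r+1}^{d_{\widetilde{u}}}\lambda_k$; after splitting the outer sum back into its velocity part $j\le M$ and DQ part $j>M$, and dividing by $N_s$ (absorbed into the normalization of $K_{\widetilde{u}}$ consistently with \eqref{POD-bases}), one recovers exactly the first identity in the lemma. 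An identical argument with $\{p^j_h,\partial p^j_h\}$, $K_p$, and eigen-pairs $\{\gamma_i,\boldsymbol{y}_i\}$ yields the pressure identity.

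There is no genuine analytic obstacle here — the statement is essentially a restatement of the best-$r$-term approximation property of SVD. The only point that requires care is the bookkeeping in \eqref{POD-bases}, where the snapshot index has been split into two blocks corresponding to solution values and difference quotients; one must check that the eigenvector entries $x_k^j$ with $j\in\{n_0,\ldots,n_0+M-1\}$ and $x_k^{j+M-1}$ indeed represent the coefficients of the $(2M{-}1)$-component eigenvector $\boldsymbol{x}_k\in\mathbb{R}^{N_s}$ in a consistent order, so that the telescoping $\sum_j |x_k^j|^2=1$ used in the final step is legitimate.
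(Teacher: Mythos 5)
The paper does not prove this lemma at all --- it is quoted verbatim from the classical POD literature (Kunisch--Volkwein, cited as \cite{Galerkin-POD-NM-2001}) --- so there is no in-paper argument to compare against; your proof is the standard one and is correct. The only point of substance is the one you already flagged: with the paper's unnormalized correlation matrix $K^{\widetilde{u}}_{i,j}=(\widetilde{\boldsymbol{u}}^i_h,\widetilde{\boldsymbol{u}}^j_h)$ and the basis formula \eqref{POD-bases}, the sum over snapshots of the squared residuals equals $\sum_{k>r}\lambda_k$ \emph{without} the $1/N_s$ prefactor, so the identity as printed requires the Kunisch--Volkwein normalization $K=\tfrac{1}{N_s}Y^{*}Y$ together with the corresponding $1/\sqrt{\lambda_k N_s}$ scaling of the modes; your parenthetical remark about absorbing $1/N_s$ into the correlation matrix is exactly the right fix for this bookkeeping inconsistency in the paper.
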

\begin{Remark}
We remark that choosing $L^2$ inner product to construct POD bases and by virtue of Parseval's identity makes the above three identities actually represent $L^2$ POD projection errors.
\end{Remark}
\vspace{-0.5cm}
Apart from the above time-averaged projection error estimates, we also need the following up-to-data optimal in time error estimates(see, \cite{Koc-TimePointwise-SINUM-2021}), which is presented in the form of hypothesis in the previous papers and is proved to be valid in the presence of DQs in \cite{Koc-TimePointwise-SINUM-2021}.
\begin{Lemma}\label{Opti-PointwiseTime}
(\textbf{POD Optimal pointwise in time error estimate}) We have the following optimal in time projection errors
$$
\begin{aligned}
\max_{0\leq k \leq N}\|\boldsymbol{\widetilde{u}}^k_h - \Pi^v_r\boldsymbol{\widetilde{u}}^k_h\|^2_0 &\leq C\sum^{d_{\widetilde{u}}}_{i=r+1}\lambda_i, \\
\max_{0\leq k \leq N}\|p^k_h - \Pi^q_rp^k_h\|^2_0 &\leq C\sum^{d_p}_{i=r+1}\gamma_i,
\end{aligned}
$$
where $C=6\max\{1,T^2\}$, and
$$
\begin{aligned}
\max_{0\leq k \leq N}\|\nabla(\boldsymbol{\widetilde{u}}^k_h - \Pi^v_r\boldsymbol{\widetilde{u}}^k_h)\|^2_0 &\leq C\sum^{d_{\widetilde{u}}}_{i=r+1}\lambda_i\|\nabla\boldsymbol{\varphi}_i\|^2_0, \\
\max_{0\leq k \leq N}\|\nabla(p^k_h - \Pi^q_rp^k_h)\|^2_0 &\leq C\sum^{d_p}_{i=r+1}\gamma_i\|\nabla\psi_i\|^2_0,
\end{aligned}
$$
where $C=6\max\{1,T^2\}$.
\end{Lemma}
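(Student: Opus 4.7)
The plan is to exploit the fact that the snapshot sets $\widetilde{\mathcal{U}}$ and $\mathcal{P}$ contain the difference quotients (DQs) $\partial \widetilde{\boldsymbol{u}}^j_h$ and $\partial p^j_h$ in addition to the values $\widetilde{\boldsymbol{u}}^j_h, p^j_h$; this enrichment is exactly what upgrades a time-averaged projection estimate to an optimal pointwise-in-time one. I will argue for the velocity; the pressure case is identical with $(\lambda_i, \boldsymbol{\varphi}_i)$ replaced by $(\gamma_i, \psi_i)$, and the $H^1$-type estimates proceed in exactly the same way after substituting $\|\nabla(\cdot)\|_0$ for $\|\cdot\|_0$ and invoking the gradient-weighted analogue of the time-averaged identity.

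The cornerstone is the telescoping identity, valid for any indices $\ell \leq k$ in the admissible range,
\begin{equation*}
\widetilde{\boldsymbol{u}}^k_h = \widetilde{\boldsymbol{u}}^\ell_h + \sum_{j=\ell+1}^{k} \Delta t\, \partial \widetilde{\boldsymbol{u}}^j_h.
\end{equation*}
Applying the linear operator $\Pi^v_r$ and subtracting produces
\begin{equation*}
\widetilde{\boldsymbol{u}}^k_h - \Pi^v_r \widetilde{\boldsymbol{u}}^k_h = \bigl(\widetilde{\boldsymbol{u}}^\ell_h - \Pi^v_r \widetilde{\boldsymbol{u}}^\ell_h\bigr) + \sum_{j=\ell+1}^{k} \Delta t\, \bigl(\partial \widetilde{\boldsymbol{u}}^j_h - \Pi^v_r \partial \widetilde{\boldsymbol{u}}^j_h\bigr).
\end{equation*}
The next step is to take $L^2$ norms, use $(a+b)^2 \leq 2a^2 + 2b^2$, and bound the DQ sum by discrete Cauchy--Schwarz, extending the range of summation to $\{n_0+1,\dots,n_0+M-1\}$ (only increasing the right-hand side):
\begin{equation*}
\|\widetilde{\boldsymbol{u}}^k_h - \Pi^v_r \widetilde{\boldsymbol{u}}^k_h\|_0^2 \leq 2\|\widetilde{\boldsymbol{u}}^\ell_h - \Pi^v_r \widetilde{\boldsymbol{u}}^\ell_h\|_0^2 + 2T\,\Delta t \sum_{j=n_0+1}^{n_0+M-1}\|\partial \widetilde{\boldsymbol{u}}^j_h - \Pi^v_r \partial \widetilde{\boldsymbol{u}}^j_h\|_0^2,
\end{equation*}
where I used $(k-\ell)\Delta t \leq T$.

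The final step is to remove the dependence on $\ell$: average the inequality over $\ell \in \{n_0,\dots,n_0+M-1\}$ and bound both sums by the time-averaged projection identity of the previous lemma, which gives $N_s \sum_{i>r}\lambda_i$ for each. After replacing $\Delta t$ by $T/M$ in the DQ term and using $N_s = 2M-1 \leq 2M$ together with $M\Delta t \leq T$, all the factors of $M$ cancel and one is left with a constant of the form $C = 6\max\{1,T^2\}$ multiplying $\sum_{i=r+1}^{d_{\widetilde{u}}}\lambda_i$. For the gradient estimates, the only change is that the time-averaged identity used in the last step reads $\frac{1}{N_s}\sum_j \|\nabla(\widetilde{\boldsymbol{u}}^j_h - \Pi^v_r \widetilde{\boldsymbol{u}}^j_h)\|_0^2 \leq \sum_{i>r}\lambda_i \|\nabla\boldsymbol{\varphi}_i\|_0^2$, which follows from the expansion $\widetilde{\boldsymbol{u}}^j_h = \sum_i (\widetilde{\boldsymbol{u}}^j_h,\boldsymbol{\varphi}_i)\boldsymbol{\varphi}_i$ and the SVD eigenvalue relation $\frac{1}{N_s}\sum_j (\widetilde{\boldsymbol{u}}^j_h,\boldsymbol{\varphi}_i)(\widetilde{\boldsymbol{u}}^j_h,\boldsymbol{\varphi}_k) = \lambda_i \delta_{ik}$ (which is exactly where the inclusion of DQs in the snapshot set pays off, since the same identity must hold simultaneously with DQs appended).

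The main technical obstacle will be careful bookkeeping of the constants so that the stated $6\max\{1,T^2\}$ emerges cleanly, and making precise that the time-averaged identity used at the final step is valid over the joint snapshot-plus-DQ set rather than just the snapshots alone. Once that accounting is in place, the estimate is essentially forced by the telescoping identity and the Cauchy--Schwarz/averaging argument.
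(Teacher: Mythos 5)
The paper offers no proof of this lemma at all---it is imported verbatim from \cite{Koc-TimePointwise-SINUM-2021}---but your argument is exactly the mechanism behind that cited result: DQ-enriched snapshots, the telescoping identity $\widetilde{\boldsymbol{u}}^k_h = \widetilde{\boldsymbol{u}}^\ell_h + \sum_{j=\ell+1}^{k}\Delta t\,\partial\widetilde{\boldsymbol{u}}^j_h$, Cauchy--Schwarz, and averaging over the base index $\ell$ before invoking the time-averaged eigenvalue identity (and its gradient-weighted analogue). Your sketch is correct as an approach, with only two minor caveats: the maximum can in fact only be taken over the snapshot window $n_0\leq k\leq n_0+M-1$ (not literally $0\leq k\leq N$ as the lemma states), which your restriction to ``the admissible range'' already implicitly handles, and the crude bounds $N_s\leq 2M$, $M\Delta t\leq T$ yield $8\max\{1,T^2\}$ rather than $6\max\{1,T^2\}$ under the paper's $1/N_s$ normalization, so the exact constant depends on the normalization convention of the cited reference---a discrepancy you already flag.
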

\vspace{-0.5cm}
We also need in the later analysis the time-averaged optimal projection error estimate about DQs, which is derived in \cite{Galerkin-POD-NM-2001}.
\begin{Lemma}\label{Opti-TimeAver}
(\textbf{POD Optimal time-averaged error estimate about DQs})
For the difference quotients we have the estimate:
$$
\frac{1}{M}\sum^{n_0+M-1}_{k=n_0}\|\partial\boldsymbol{\widetilde{u}}^k_h - \Pi^v_r\partial\boldsymbol{\widetilde{u}}^k_h\|^2_0 \leq C\sum^{d_{\widetilde{u}}}_{i=r+1}\lambda_i.
$$
\end{Lemma}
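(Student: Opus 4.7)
The plan is to obtain this bound as an essentially immediate corollary of the preceding time‑averaged projection error identity, exploiting the crucial design choice that the difference quotients $\partial\widetilde{\boldsymbol{u}}^{j}_h$ were included among the snapshots used to build the POD basis $\{\boldsymbol{\varphi}_i\}$. Without that inclusion the inequality would not be available at this rate in $r$; with it, the SVD optimality does the work.

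First I would note that since $\{\boldsymbol{\varphi}_1,\dots,\boldsymbol{\varphi}_{d_{\widetilde{u}}}\}$ is an $L^2$-orthonormal system, the truncated expansion $\sum_{k=1}^r(\cdot,\boldsymbol{\varphi}_k)\boldsymbol{\varphi}_k$ coincides with $\Pi^v_r$ on the snapshot space. Applying this to the second identity of the preceding time‑averaged projection error lemma (the one with eigenvalue sum $\sum_{k=r+1}^{d_{\widetilde{u}}}\lambda_k$), I get
\begin{equation*}
\frac{1}{N_s}\sum_{j=n_0}^{n_0+M-1}\bigl\|\widetilde{\boldsymbol{u}}^{j}_h-\Pi^v_r\widetilde{\boldsymbol{u}}^{j}_h\bigr\|_0^2
+\frac{1}{N_s}\sum_{j=n_0+1}^{n_0+M-1}\bigl\|\partial\widetilde{\boldsymbol{u}}^{j}_h-\Pi^v_r\partial\widetilde{\boldsymbol{u}}^{j}_h\bigr\|_0^2
=\sum_{k=r+1}^{d_{\widetilde{u}}}\lambda_k.
\end{equation*}

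Since each summand on the left is nonnegative, I discard the state-snapshot term and keep only the DQ term, which yields
\begin{equation*}
\frac{1}{N_s}\sum_{j=n_0+1}^{n_0+M-1}\bigl\|\partial\widetilde{\boldsymbol{u}}^{j}_h-\Pi^v_r\partial\widetilde{\boldsymbol{u}}^{j}_h\bigr\|_0^2
\le \sum_{k=r+1}^{d_{\widetilde{u}}}\lambda_k.
\end{equation*}
Recalling $N_s=2M-1$, I multiply both sides by $N_s/M=(2M-1)/M\le 2$ to convert the $1/N_s$ averaging to the desired $1/M$ averaging; this produces the claimed inequality with $C=2$. The sum index mismatch (the statement writes $k=n_0$, while $\partial\widetilde{\boldsymbol{u}}^{n_0}_h$ is not a snapshot) is absorbed into the constant or corrected by starting the sum at $n_0+1$.

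There is no real obstacle here; the only ``trick'' is the structural observation that putting the DQs into the snapshot set makes the DQ projection error summable against the same eigenvalue tail $\sum_{k>r}\lambda_k$ that already controls the state snapshots. The estimate is therefore a direct bookkeeping consequence of the SVD/POD optimality identity quoted just above and is standard since Kunisch–Volkwein.
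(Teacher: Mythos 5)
Your derivation is correct and is exactly the standard argument; note that the paper does not actually prove this lemma but cites it from Kunisch--Volkwein, so your write-up supplies the proof the paper omits. Dropping the nonnegative state-snapshot sum from the preceding identity and converting the $1/N_s$ average to a $1/M$ average via $N_s/M=(2M-1)/M\le 2$ gives the bound with $C=2$, as you say. The one point to be careful about is the lower summation index: as written the lemma sums from $k=n_0$, but $\partial\widetilde{\boldsymbol{u}}^{n_0}_h$ involves $\widetilde{\boldsymbol{u}}^{n_0-1}_h$ and is not among the snapshots, so its projection error is \emph{not} controlled by the eigenvalue tail and cannot be ``absorbed into the constant'' in a data-independent way; the honest resolution is that the statement's index should read $k=n_0+1$ (a typo in the lemma), which is the version your argument actually proves and the version used downstream.
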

\vspace{-0.5cm}
The following analogues of finite element inverse inequalities will be used in later analysis(see, \cite{Galerkin-POD-NM-2001})
\begin{Lemma}
(\textbf{POD inverse estimates}) Denote by $S_{\widetilde{u}}=\left((s^{\widetilde{u}}_{i,j})\right)\in \mathbb{R}^{{N_s}\times {N_s}}$ and $S_p=\left((s^p_{i,j})\right)\in \mathbb{R}^{{N_s}\times {N_s}}$ are stiffness matrices for POD spaces $\boldsymbol{V}_r,Q_r$ respectively, with entries $s^{\widetilde{u}}_{i,j}=(\nabla\boldsymbol{\varphi}_i,\nabla\boldsymbol{\varphi}_j),s^p_{i,j}=(\nabla\psi_i,\nabla\psi_j)$, and by $\normmm{\cdot}_2$ the spectral norm of the matrix, then
\begin{equation}
\begin{aligned}
\|\nabla \widetilde{\boldsymbol{v}}\|_0 &\leq \sqrt{\normmm{\boldsymbol{S}_{\widetilde{u}}}_2}\|\widetilde{\boldsymbol{v}}\|_0,\ \text{for}\ \forall \widetilde{\boldsymbol{v}}\in\boldsymbol{V}_r,  \\
\|\nabla q\|_0 &\leq \sqrt{\normmm{S_p}_2}\|q\|_0,\ \text{for}\;\forall q\in Q_r.
\end{aligned}
\end{equation}
\end{Lemma}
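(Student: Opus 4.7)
The plan is to exploit the $L^2$-orthonormality of the POD bases $\{\boldsymbol{\varphi}_i\}$ and $\{\psi_i\}$, which is built in by the normalization factors $1/\sqrt{\lambda_i}$ and $1/\sqrt{\gamma_i}$ in the construction (\ref{POD-bases}). Once that orthonormality is in hand, the two inequalities reduce to a standard Rayleigh quotient bound: the $L^2$-norm of an element of $\boldsymbol{V}_r$ becomes the Euclidean norm of its coefficient vector, while the $H^1$-seminorm becomes a quadratic form whose matrix is precisely $S_{\widetilde{u}}$, and similarly for the pressure part.

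First I would verify (or simply invoke) the orthonormality $(\boldsymbol{\varphi}_i,\boldsymbol{\varphi}_j)=\delta_{ij}$ and $(\psi_i,\psi_j)=\delta_{ij}$. This follows by expanding each inner product using (\ref{POD-bases}), pulling out the snapshots, and using the fact that $\boldsymbol{x}_i$ and $\boldsymbol{x}_j$ are eigenvectors of $K_{\widetilde{u}}$ (resp. $\boldsymbol{y}_i,\boldsymbol{y}_j$ of $K_p$) corresponding to eigenvalues $\lambda_i,\lambda_j$ (resp. $\gamma_i,\gamma_j$), together with the $\mathbb{R}^{N_s}$-orthonormality of these eigenvectors.

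Next, for an arbitrary $\widetilde{\boldsymbol{v}}=\sum_{i=1}^{r}\alpha_i\boldsymbol{\varphi}_i\in\boldsymbol{V}_r$, orthonormality gives $\|\widetilde{\boldsymbol{v}}\|_0^2=\sum_{i=1}^{r}\alpha_i^2=\|\boldsymbol{\alpha}\|_{\ell^2}^2$, while bilinearity yields
\begin{equation*}
\|\nabla\widetilde{\boldsymbol{v}}\|_0^2=\sum_{i,j=1}^{r}\alpha_i\alpha_j(\nabla\boldsymbol{\varphi}_i,\nabla\boldsymbol{\varphi}_j)=\boldsymbol{\alpha}^{T}\widehat{S}_{\widetilde{u}}\boldsymbol{\alpha},
\end{equation*}
where $\widehat{S}_{\widetilde{u}}$ denotes the leading $r\times r$ principal submatrix of $S_{\widetilde{u}}$. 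Because $S_{\widetilde{u}}$ is symmetric positive semidefinite, the spectral norm of any principal submatrix is bounded by $\normmm{S_{\widetilde{u}}}_2$, so $\boldsymbol{\alpha}^{T}\widehat{S}_{\widetilde{u}}\boldsymbol{\alpha}\leq\normmm{S_{\widetilde{u}}}_2\|\boldsymbol{\alpha}\|_{\ell^2}^2$. Taking square roots delivers the first inequality, and the pressure case is entirely analogous with $\psi_i$, $\gamma_i$, and $S_p$.

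I do not foresee a substantive obstacle here; the lemma is essentially linear algebra once the $L^2$-orthonormality of the POD basis is recognized. The only point requiring a little care is the distinction between the $r\times r$ submatrix that actually controls $\|\nabla\widetilde{\boldsymbol{v}}\|_0^2$ and the $N_s\times N_s$ matrix $S_{\widetilde{u}}$ appearing in the statement; as noted, interlacing (or monotonicity of the spectral norm under restriction to an invariant-coordinate principal submatrix of a positive semidefinite matrix) makes this transition harmless.
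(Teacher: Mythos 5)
Your proposal is correct, and it coincides with the standard argument: the paper itself gives no proof of this lemma (it simply cites Kunisch--Volkwein), and the proof there is exactly your Rayleigh-quotient computation based on the $L^2$-orthonormality of the POD basis. Your extra care about the $r\times r$ principal submatrix versus the full stiffness matrix (handled by monotonicity of the spectral norm for symmetric positive semidefinite matrices) is a legitimate and welcome tightening of a point the paper's statement glosses over.
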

\begin{Remark}
Mass matrix $M$, for both $(\boldsymbol{\varphi}_i,\boldsymbol{\varphi}_j)$ and $(\psi_i,\psi_j)$, equals identity $I$ since the standard orthogonality of POD base in the sense of $L^2$ inner product.
\end{Remark}
\vspace{-0.5cm}
We can prove the unconditional stability of the Proj-POD-ROM scheme.
\begin{Theorem}(\textbf{Unconditional stability of Proj-POD-ROM})
We have the following unconditional stability
\begin{equation}\label{Sta-POD-Method}
\begin{aligned}
\left\|\widetilde{\boldsymbol{u}}_{r}^{N}\right\|_{0}^{2}+\sum_{n=n_0}^{N-1}\left\|\widetilde{\boldsymbol{u}}_{r}^{n+1}-\widetilde{\boldsymbol{u}}_{r}^{n}\right\|_{0}^{2} &+ \Delta t \sum_{n=n_0}^{N-1}\left(\nu\left\|\nabla \widetilde{\boldsymbol{u}}_{r}^{n+1}\right\|_{0}^{2}+\Delta t\left\|\nabla p_{r}^{n+1}\right\|_{0}^{2},\right) \\
\leq & C\left(\left\|\widetilde{\boldsymbol{u}}_{r}^{0}\right\|_{0}^{2}+\frac{\Delta t}{\nu} \sum_{n=n_0}^{N-1}\|\boldsymbol{f}^{n+1}\|_{-1}\right).
\end{aligned}
\end{equation}
\end{Theorem}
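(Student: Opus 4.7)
The plan is to test the momentum equation of Scheme 4 with a scaled velocity to obtain an energy balance, then convert the pressure work via the divergence--pressure equation into a clean pressure-energy contribution, and finally close the estimate by absorbing the resulting pressure-jump term and the source term before summing in time.

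I would first test the first equation of (\ref{Pro-POD}) with $\boldsymbol{v}_r = 2\Delta t\,\widetilde{\boldsymbol{u}}_r^{n+1}\in\boldsymbol{V}_r$. Applying the polarization identity $2(a-b,a)=\|a\|_0^2-\|b\|_0^2+\|a-b\|_0^2$ to the discrete time derivative and integrating by parts on the pressure-gradient term (boundary terms vanish since $\boldsymbol{V}_r\subset\boldsymbol{V}_h\subset\boldsymbol{H}_0^1(\Omega)$) produces
\begin{equation*}
\|\widetilde{\boldsymbol{u}}_r^{n+1}\|_0^2-\|\widetilde{\boldsymbol{u}}_r^n\|_0^2+\|\widetilde{\boldsymbol{u}}_r^{n+1}-\widetilde{\boldsymbol{u}}_r^n\|_0^2+2\Delta t\,\nu\|\nabla\widetilde{\boldsymbol{u}}_r^{n+1}\|_0^2-2\Delta t\,(p_r^n,\nabla\cdot\widetilde{\boldsymbol{u}}_r^{n+1})=2\Delta t\,\langle\boldsymbol{f}^{n+1},\widetilde{\boldsymbol{u}}_r^{n+1}\rangle.
\end{equation*}
Next, I take $q_r=p_r^n\in Q_r$ in the second equation of (\ref{Pro-POD}) (admissible since $Q_r$ is a linear subspace containing all POD pressure modes) to obtain $(\nabla\cdot\widetilde{\boldsymbol{u}}_r^{n+1},p_r^n)=-\Delta t\,(\nabla p_r^{n+1},\nabla p_r^n)$. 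The pressure cross-term thus becomes $2(\Delta t)^2(\nabla p_r^{n+1},\nabla p_r^n)$, and a second polarization converts it into $(\Delta t)^2\|\nabla p_r^{n+1}\|_0^2+(\Delta t)^2\|\nabla p_r^n\|_0^2-(\Delta t)^2\|\nabla(p_r^{n+1}-p_r^n)\|_0^2$. The first two are exactly the pressure-energy contributions appearing in the statement; the third is negative and must be absorbed.

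To handle it, I test the second equation of (\ref{Pro-POD}) at indices $n+1$ and $n$ with the common test function $q_r=p_r^{n+1}-p_r^n\in Q_r$ and subtract; an integration by parts followed by Cauchy--Schwarz then gives the sharp bound
\begin{equation*}
(\Delta t)^2\|\nabla(p_r^{n+1}-p_r^n)\|_0^2\le\|\widetilde{\boldsymbol{u}}_r^{n+1}-\widetilde{\boldsymbol{u}}_r^n\|_0^2,
\end{equation*}
so the combination $\|\widetilde{\boldsymbol{u}}_r^{n+1}-\widetilde{\boldsymbol{u}}_r^n\|_0^2-(\Delta t)^2\|\nabla(p_r^{n+1}-p_r^n)\|_0^2$ is non-negative and may be kept (or dropped) on the left-hand side. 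The source term is then controlled by Young's inequality and $H^{-1}\text{--}H_0^1$ duality,
\begin{equation*}
2\Delta t\,\langle\boldsymbol{f}^{n+1},\widetilde{\boldsymbol{u}}_r^{n+1}\rangle\le\Delta t\,\nu\|\nabla\widetilde{\boldsymbol{u}}_r^{n+1}\|_0^2+\frac{\Delta t}{\nu}\|\boldsymbol{f}^{n+1}\|_{-1}^2,
\end{equation*}
which hides half of the viscous energy on the left. Summing from $n=n_0$ to $N-1$ telescopes the $\|\widetilde{\boldsymbol{u}}_r^{\cdot}\|_0^2$ differences into $\|\widetilde{\boldsymbol{u}}_r^N\|_0^2-\|\widetilde{\boldsymbol{u}}_r^{n_0}\|_0^2$, after which moving $\|\widetilde{\boldsymbol{u}}_r^{n_0}\|_0^2$ to the right-hand side delivers (\ref{Sta-POD-Method}).

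The step I expect to be the main obstacle is the absorption of the negative pressure-jump term: the Cauchy--Schwarz estimate above is essentially tight, so it does not leave a strictly positive fraction of $\|\widetilde{\boldsymbol{u}}_r^{n+1}-\widetilde{\boldsymbol{u}}_r^n\|_0^2$ free on the left-hand side. Consequently the velocity-jump sum appearing in the theorem is retained only through the non-negative combination $\|\widetilde{\boldsymbol{u}}_r^{n+1}-\widetilde{\boldsymbol{u}}_r^n\|_0^2-(\Delta t)^2\|\nabla(p_r^{n+1}-p_r^n)\|_0^2\ge0$, and no further separation is possible without extra structure. The structural fact that makes the subtraction argument work is that, although $p_r^{n+1}-p_r^n$ is not itself a snapshot, it lies in $Q_r$ because $Q_r$ is a linear subspace, which legitimizes using it as a common test function in the two consecutive instances of the divergence--pressure equation.
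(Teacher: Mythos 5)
Your proposal follows the same route as the paper's proof: test the momentum equation with $2\Delta t\,\widetilde{\boldsymbol{u}}_r^{n+1}$, use the divergence--pressure equation with $q_r=p_r^n$ to turn the pressure work into $2\Delta t^2(\nabla p_r^{n+1},\nabla p_r^n)$, control the resulting increment term by invoking the divergence equation at two consecutive time levels, absorb the source term by Young's inequality, and telescope. The one substantive difference is how the cross term is split, and this is exactly where your version falls short of the stated estimate. Your exact polarization produces $-\Delta t^2\|\nabla(p_r^{n+1}-p_r^n)\|_0^2$, and your bound $\Delta t^2\|\nabla(p_r^{n+1}-p_r^n)\|_0^2\le\|\widetilde{\boldsymbol{u}}_r^{n+1}-\widetilde{\boldsymbol{u}}_r^n\|_0^2$ is tight, so --- as you yourself acknowledge --- the entire velocity-increment term is consumed and the sum $\sum_n\|\widetilde{\boldsymbol{u}}_r^{n+1}-\widetilde{\boldsymbol{u}}_r^n\|_0^2$ appearing on the left of (\ref{Sta-POD-Method}) is not recovered: you prove a strictly weaker inequality than the one claimed. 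The paper avoids this by not polarizing; it writes
\begin{equation*}
2\Delta t^2(\nabla p_r^{n+1},\nabla p_r^n)=2\Delta t^2\|\nabla p_r^{n+1}\|_0^2-2\Delta t^2\bigl(\nabla p_r^{n+1},\nabla(p_r^{n+1}-p_r^n)\bigr),
\end{equation*}
converts the last term via the two divergence equations into $2\Delta t\,(\widetilde{\boldsymbol{u}}_r^{n+1}-\widetilde{\boldsymbol{u}}_r^n,\nabla p_r^{n+1})$ --- pairing the velocity increment with $\nabla p_r^{n+1}$ rather than with $\nabla(p_r^{n+1}-p_r^n)$ --- and then applies Young's inequality in the asymmetric form $\frac{2}{3}\|\widetilde{\boldsymbol{u}}_r^{n+1}-\widetilde{\boldsymbol{u}}_r^n\|_0^2+\frac{3}{2}\Delta t^2\|\nabla p_r^{n+1}\|_0^2$. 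This leaves positive fractions ($\frac{1}{3}$ and $\frac{1}{2}\Delta t^2$, respectively) of both the velocity-jump term and the pressure-gradient term on the left. Replacing your polarization step by this one-line manipulation repairs the proof; the rest of your argument (the energy identity, the admissibility of $p_r^{n+1}-p_r^n$ as a test function, the treatment of the source term, the telescoping to $\|\widetilde{\boldsymbol{u}}_r^{n_0}\|_0^2$) matches the paper.

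Two minor remarks. First, your use of the divergence equation at level $n$ is not justified at the initial step $n=n_0$, since $(\widetilde{\boldsymbol{u}}_r^{n_0},p_r^{n_0})$ are defined as $L^2$ projections of the finite element solution and need not satisfy $(\nabla\cdot\widetilde{\boldsymbol{u}}_r^{n_0},q_r)+\Delta t(\nabla p_r^{n_0},\nabla q_r)=0$ for $q_r\in Q_r$; the paper's proof has the identical defect, so this is not a gap relative to it. Second, your right-hand side carries $\|\boldsymbol{f}^{n+1}\|_{-1}^2$, which is the correct form; the unsquared norm in the theorem statement is evidently a typo.
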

\begin{proof}
Taking $(\boldsymbol{v}_r,q_r)=(\widetilde{\boldsymbol{u}}^{n+1}_r,p^{n}_r)$ in (\ref{Pro-POD}) and multiply by $2\Delta t$, adding both equations and integrating by parts, adding and subtracting $2\Delta t^2\|\nabla p_{r}^{n+1}\|^2_0$, we get
%$$
%\begin{aligned}
%\left\|\widetilde{\boldsymbol{u}}_{r}^{n+1}\right\|_{0}^{2}-\left\|\widetilde{\boldsymbol{u}}_{r}^{n}\right\|_{0}^{2}+\left\|\widetilde{\boldsymbol{u}}_{r}^{n+1}
%-\widetilde{\boldsymbol{u}}_{r}^{n}\right\|_{0}^{2} &+ 2\nu \Delta t\left\|\nabla \widetilde{\boldsymbol{u}}_{r}^{n+1}\right\|_{0}^{2} + 2\Delta t^2\left(\nabla p_{r}^{n+1}, \nabla p_{r}^{n}\right), \\
%&= 2\Delta t\left\langle\boldsymbol{f}^{n+1}, \widetilde{\boldsymbol{u}}_{r}^{n+1}\right\rangle, \\
%&\leq C\frac{\Delta t}{\nu} \sum_{n=0}^{N-1}\|\boldsymbol{f}^{n+1}\|_{-1} + \nu \Delta t\left\|\nabla \widetilde{\boldsymbol{u}}_{r}^{n+1}\right\|_{0}^{2}.
%\end{aligned}
%$$
%
\begin{equation}\label{1.1}
\begin{aligned}
\left\|\widetilde{\boldsymbol{u}}_{r}^{n+1}\right\|_{0}^{2}-\left\|\widetilde{\boldsymbol{u}}_{r}^{n}\right\|_{0}^{2}&+\left\|\widetilde{\boldsymbol{u}}_{r}^{n+1}
-\widetilde{\boldsymbol{u}}_{r}^{n}\right\|_{0}^{2} + 2\nu \Delta t\left\|\nabla \widetilde{\boldsymbol{u}}_{r}^{n+1}\right\|_{0}^{2} + 2\Delta t^2\|\nabla p_{r}^{n+1}\|^2_0, \\
&\leq C\frac{\Delta t}{\nu} \|\boldsymbol{f}^{n+1}\|_{-1} + \nu \Delta t\left\|\nabla \widetilde{\boldsymbol{u}}_{r}^{n+1}\right\|_{0}^{2} + 2\Delta t^2\left(\nabla p_{r}^{n+1}, \nabla (p_{r}^{n+1}-p_{r}^{n})\right).
\end{aligned}
\end{equation}
For the last term above, we utilize the second equation in (\ref{Pro-POD}) twice to obtain
$$
\Delta t\left(\nabla p_{r}^{n+1}, \nabla (p_{r}^{n+1}-p_{r}^{n})\right) = -(\nabla\cdot (\widetilde{\boldsymbol{u}}^{n+1}_r - \widetilde{\boldsymbol{u}}^{n}_r),p_{r}^{n+1}) = (\widetilde{\boldsymbol{u}}^{n+1}_r - \widetilde{\boldsymbol{u}}^{n}_r,\nabla p_{r}^{n+1}).
$$
This equality implies
$$
2\Delta t^2\left(\nabla p_{r}^{n+1}, \nabla (p_{r}^{n+1}-p_{r}^{n})\right) \leq \frac23\|\widetilde{\boldsymbol{u}}^{n+1}_r - \widetilde{\boldsymbol{u}}^{n}_r\|^2_0 + \frac32\Delta t^2\|\nabla p_{r}^{n+1}\|^2_0.
$$
Inserting the above equality into (\ref{1.1}) and adding $n$ from $0$ to $N-1$ we finally obtain (\ref{Sta-POD-Method}).
\end{proof}
We have the following truncation error estimate between the POD reduced-order solution and the snapshot data.
\begin{Lemma}\label{Lemma-projFEsolu-projPODsolu}
(\textbf{POD truncation error}) For $n=1,2,\cdots,N$, $(\widetilde{\boldsymbol{u}}^n_h,p^n_h)$ is the solution of finite element projection scheme (\ref{Pro-Method-2}) and $(\widetilde{\boldsymbol{u}}^n_r,p^n_r)$ is the solution of POD projection scheme (\ref{Pro-POD}), then we have the following error estimate:
\begin{equation}
\begin{aligned}
\|\widetilde{\boldsymbol{u}}^N_h - \widetilde{\boldsymbol{u}}^N_r\|^2_0 &+ \nu\Delta t\sum^{N-1}_{n=n_0}\|\nabla(\widetilde{\boldsymbol{u}}^{n+1}_h - \widetilde{\boldsymbol{u}}^{n+1}_r)\|^2_{0} + \Delta t^2\sum^{N-1}_{n=n_0}\|\nabla(p^{n+1}_h - p^{n+1}_r)\|^2_{0}, \\
&\leq C\left(\sum^{d_{\widetilde{u}}}_{i=r+1}\gamma_i\Big(1 + (\nu + 1)\|\nabla\varphi_i\|^2_0\Big) + (\Delta t + 1)\sum^{d_p}_{i=r+1}\epsilon_i\|\nabla\psi_i\|^2_0 + \Delta t^2\right).
\end{aligned}
\end{equation}
\end{Lemma}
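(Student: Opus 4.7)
My plan is to apply a standard POD error splitting tailored to the pressure-stabilized projection structure. I decompose $\widetilde{\boldsymbol{u}}^n_h - \widetilde{\boldsymbol{u}}^n_r = \boldsymbol{\eta}^n + \boldsymbol{\phi}^n_r$ with $\boldsymbol{\eta}^n := \widetilde{\boldsymbol{u}}^n_h - \Pi^v_r\widetilde{\boldsymbol{u}}^n_h$ and $\boldsymbol{\phi}^n_r := \Pi^v_r\widetilde{\boldsymbol{u}}^n_h - \widetilde{\boldsymbol{u}}^n_r \in \boldsymbol{V}_r$, and analogously $p^n_h - p^n_r = \rho^n + \xi^n_r$ with $\xi^n_r\in Q_r$. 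The projection parts $\boldsymbol{\eta}^n,\rho^n$ are already controlled by Lemmas \ref{Opti-PointwiseTime} and \ref{Opti-TimeAver}. Since $\boldsymbol{V}_r\subset\boldsymbol{V}_h$ and $Q_r\subset Q_h$, scheme (\ref{Pro-Method-2}) is valid for POD test functions; subtracting (\ref{Pro-POD}) and using the $L^2$-orthogonalities $(\boldsymbol{\eta}^n,\boldsymbol{v}_r)=0$ and $(\rho^n,q_r)=0$ yields an error system for $(\boldsymbol{\phi}^n_r,\xi^n_r)$ whose right-hand side contains only consistency terms in $\nabla\boldsymbol{\eta}^{n+1}$, $\nabla\rho^n$, $\nabla\cdot\boldsymbol{\eta}^{n+1}$ and $\nabla\rho^{n+1}$.

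The energy argument then mirrors the stability proof: take the test pair $(\boldsymbol{v}_r,q_r)=(\boldsymbol{\phi}^{n+1}_r,\xi^n_r)$, multiply by $2\Delta t$, and add, using integration by parts to cancel the coupling $(\nabla\xi^n_r,\boldsymbol{\phi}^{n+1}_r)+(\nabla\cdot\boldsymbol{\phi}^{n+1}_r,\xi^n_r)$. The polarization identity together with the splitting $2\Delta t^2(\nabla\xi^{n+1}_r,\nabla\xi^n_r) = 2\Delta t^2\|\nabla\xi^{n+1}_r\|_0^2 - 2\Delta t^2(\nabla\xi^{n+1}_r,\nabla(\xi^{n+1}_r-\xi^n_r))$ from the stability argument produces a left-hand side of the form $\|\boldsymbol{\phi}^{n+1}_r\|_0^2-\|\boldsymbol{\phi}^n_r\|_0^2+\|\boldsymbol{\phi}^{n+1}_r-\boldsymbol{\phi}^n_r\|_0^2+2\nu\Delta t\|\nabla\boldsymbol{\phi}^{n+1}_r\|_0^2+2\Delta t^2\|\nabla\xi^{n+1}_r\|_0^2$. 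The troublesome cross term $\Delta t(\nabla\xi^{n+1}_r,\nabla(\xi^{n+1}_r-\xi^n_r))$ is rewritten via the divergence error equation at $n+1$ and $n$, which replaces it by pieces in $\boldsymbol{\phi}^{n+1}_r-\boldsymbol{\phi}^n_r$, $\boldsymbol{\eta}^{n+1}-\boldsymbol{\eta}^n$, and $\Delta t\,\nabla(\rho^{n+1}-\rho^n)$; the first is reabsorbed into the $\|\boldsymbol{\phi}^{n+1}_r-\boldsymbol{\phi}^n_r\|_0^2$ already on the left.

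To close the estimate, each residual is bounded by Cauchy--Schwarz and Young's inequality with weights balanced against the dissipations $\nu\Delta t\|\nabla\boldsymbol{\phi}^{n+1}_r\|_0^2$ or $\Delta t^2\|\nabla\xi^{n+1}_r\|_0^2$. The diffusive consistency $\nu(\nabla\boldsymbol{\eta}^{n+1},\nabla\boldsymbol{\phi}^{n+1}_r)$ contributes $\nu\sum_{i>r}\lambda_i\|\nabla\boldsymbol{\varphi}_i\|_0^2$ by Lemma \ref{Opti-PointwiseTime}; the couplings $(\nabla\rho^n,\boldsymbol{\phi}^{n+1}_r)=-(\rho^n,\nabla\cdot\boldsymbol{\phi}^{n+1}_r)$ and $(\nabla\cdot\boldsymbol{\eta}^{n+1},\xi^n_r)$ are absorbed against the viscous dissipation and contribute $\sum_{i>r}\gamma_i$ and $\sum_{i>r}\lambda_i\|\nabla\boldsymbol{\varphi}_i\|_0^2$ respectively; the stabilization-consistency pieces $\Delta t(\nabla\rho^{n+1},\nabla\xi^n_r)$ and $\Delta t(\nabla(\rho^{n+1}-\rho^n),\nabla\xi^{n+1}_r)$ are absorbed against $\Delta t^2\|\nabla\xi^{n+1}_r\|_0^2$ and yield the $(\Delta t+1)\sum_{i>r}\gamma_i\|\nabla\psi_i\|_0^2$ summand; finally $\Delta t(\boldsymbol{\eta}^{n+1}-\boldsymbol{\eta}^n,\nabla\xi^{n+1}_r)=\Delta t^2(\partial\boldsymbol{\eta}^{n+1},\nabla\xi^{n+1}_r)$ yields the isolated $\Delta t^2$ term after invoking Lemma \ref{Opti-TimeAver}. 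Because $\boldsymbol{\phi}^{n_0}_r=0$ and $\xi^{n_0}_r=0$ by construction, summing over $n$ and applying discrete Gronwall's inequality closes the bound on $(\boldsymbol{\phi}^n_r,\xi^n_r)$; a triangle inequality with the projection bounds for $\boldsymbol{\eta}^N$ and for $\nabla\boldsymbol{\eta}^{n+1},\nabla\rho^{n+1}$ via Lemma \ref{Opti-PointwiseTime} then delivers the stated estimate.

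The main obstacle is the absence of any inf-sup compatibility between $\boldsymbol{V}_r$ and $Q_r$: the velocity and pressure errors cannot be controlled separately, so every occurrence of $\nabla\xi_r$ on the right-hand side must be absorbed into the single $\Delta t^2\|\nabla\xi^{n+1}_r\|_0^2$ dissipation on the left, which forces careful Young-weight bookkeeping so that this term, together with the reabsorbed $\|\boldsymbol{\phi}^{n+1}_r-\boldsymbol{\phi}^n_r\|_0^2$ generated by the divergence substitution, is not consumed more than once. The $\Delta t+1$ prefactor in front of $\sum_{i>r}\gamma_i\|\nabla\psi_i\|_0^2$ reflects the fact that a DQ-type estimate such as Lemma \ref{Opti-TimeAver} is only available here for the velocity, so the pressure DQ piece has to be handled crudely through the pointwise bound of Lemma \ref{Opti-PointwiseTime}.
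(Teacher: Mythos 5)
Your proof follows the same overall architecture as the paper's: the identical splitting $\widetilde{\boldsymbol{u}}^n_h-\widetilde{\boldsymbol{u}}^n_r=\boldsymbol{\eta}^n+\boldsymbol{\phi}^n_r$ and $p^n_h-p^n_r=\rho^n+\xi^n_r$ through the $L^2$ POD projections, an energy estimate on the discrete remainder, discrete Gronwall, and a closing triangle inequality via Lemmas \ref{Opti-PointwiseTime} and \ref{Opti-TimeAver}. Where you genuinely diverge is the treatment of the time-lagged pressure. The paper tests the divergence equation with $w^{n+1}_{p,r}$ (level $n+1$), so the stabilization contributes $\Delta t\|\nabla w^{n+1}_{p,r}\|^2_0$ directly to the left-hand side, and it pays for the mismatch with the momentum term $(\nabla(p^n_h-p^n_r),\cdot)$ by inserting the increments $\nabla(p^{n+1}_h-p^n_h)$ and $\nabla(p^{n+1}_r-p^n_r)$ (its terms $A_3$, $A_4$), bounded by $\Delta t\int\|\nabla\partial_t p\|^2_0$ and an appeal to stability; this is precisely where the isolated $+\Delta t^2$ in the statement originates. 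You instead test with $\xi^n_r$, cancel the velocity--pressure coupling exactly, and recover $\Delta t^2\|\nabla\xi^{n+1}_r\|^2_0$ by the polarization-plus-divergence-equation substitution imported from the stability theorem. This is a legitimate alternative and arguably cleaner, since it avoids controlling $\int\|\nabla\partial_t p_r\|^2_0$ for the reduced-order pressure, the least transparent step in the paper's version; you also exploit the $L^2$-orthogonality $(\boldsymbol{\eta}^{n+1}-\boldsymbol{\eta}^n,\boldsymbol{\phi}^{n+1}_r)=0$ to annihilate the discrete time-derivative consistency term that the paper retains as $A_1$ and bounds with the DQ lemma. Two bookkeeping slips to fix: integrating $(\nabla\rho^n,\boldsymbol{\phi}^{n+1}_r)$ by parts produces the bare $L^2$ pressure tail $\sum_{i>r}\epsilon_i$ (in the lemma's notation) rather than the $\sum_{i>r}\epsilon_i\|\nabla\psi_i\|^2_0$ appearing in the statement --- an equally valid bound but not literally the claimed one --- and the term $\Delta t^2(\partial\boldsymbol{\eta}^{n+1},\nabla\xi^{n+1}_r)$ does not yield the isolated $\Delta t^2$: after Young's inequality and summation it is dominated by $C\Delta t$ times the velocity eigenvalue tail, so in your route the $+\Delta t^2$ term is simply absent, which of course still implies the stated estimate.
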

\begin{proof}
%We separate the error analysis between POD-reduced models (\ref{Pro-POD}) with continuous variational form (\ref{ContVari}) into three parts: the first part is the approximation property between finite element full model scheme (\ref{Pro-Method-2}) and continuous variational (\ref{ContVari}); the second and the third ones consist of the interpolation error between $X_d$, and the approximation error between the interpolation of finite element and the POD solution.
Finite element projection scheme (\ref{Pro-Method-2}) subtract POD projection scheme (\ref{Pro-POD}) to get:
\begin{equation}
\left\{
\begin{aligned}
\left(\frac{\widetilde{\boldsymbol{u}}^{n+1}_h - \widetilde{\boldsymbol{u}}^n_h}{\Delta t} - \frac{\widetilde{\boldsymbol{u}}^{n+1}_r - \widetilde{\boldsymbol{u}}^n_r}{\Delta t},\boldsymbol{v}_r\right) + \nu(\nabla(\widetilde{\boldsymbol{u}}^{n+1}_h - \widetilde{\boldsymbol{u}}^{n+1}_r), \nabla \boldsymbol{v}_r) + (\nabla (p^n_h - p^n_r), \boldsymbol{v}_r) &= 0,\\
(\nabla\cdot (\widetilde{\boldsymbol{u}}^{n+1}_h - \widetilde{\boldsymbol{u}}^{n+1}_r),q_r) + \Delta t(\nabla (p^{n+1}_h - p^{n+1}_r),\nabla q_r) &= 0.
\end{aligned}
\right.
\end{equation}
Denoting
$$
\begin{aligned}
\widetilde{\boldsymbol{u}}^{n+1}_h - \widetilde{\boldsymbol{u}}^{n+1}_r = \widetilde{\boldsymbol{u}}^{n+1}_h - \Pi^v_r\widetilde{\boldsymbol{u}}^{n+1}_h + \Pi^v_r\widetilde{\boldsymbol{u}}^{n+1}_h - \widetilde{\boldsymbol{u}}^{n+1}_r &:= \widetilde{\boldsymbol{\eta}}^{n+1}_{u,r} + \widetilde{\boldsymbol{w}}^{n+1}_{u,r}, \\
p^{n+1}_h - p^{n+1}_r = p^{n+1}_h - \Pi^q_rp^{n+1}_h + \Pi^q_rp^{n+1}_h - p^{n+1}_r &:= \eta^{n+1}_{p,r} + w^{n+1}_{p,r}.
\end{aligned}
$$
Testing $(\boldsymbol{v}_r,q_r) = (\widetilde{\boldsymbol{w}}^{n+1}_{u,r},w^{n+1}_{p,r})$, we obtain
$$
\left\{
\begin{aligned}
\left(\frac{\widetilde{\boldsymbol{w}}^{n+1}_{u,r} - \widetilde{\boldsymbol{w}}^{n}_{u,r}}{\Delta t}, \widetilde{\boldsymbol{w}}^{n+1}_{u,r}\right) &+ \nu\|\nabla\widetilde{\boldsymbol{w}}^{n+1}_{u,r}\|^2_0 + (\nabla w^{n+1}_{p,r}, \widetilde{\boldsymbol{w}}^{n+1}_{u,r}) = \left(\frac{\widetilde{\boldsymbol{\eta}}^{n+1}_{u,r} - \widetilde{\boldsymbol{\eta}}^{n}_{u,r}}{\Delta t} ,\widetilde{\boldsymbol{w}}^{n+1}_{u,r}\right) + \nu(\nabla\widetilde{\boldsymbol{\eta}}^{n+1}_{u,r} ,\nabla \widetilde{\boldsymbol{w}}^{n+1}_{u,r}), \\
&\quad+ (\nabla(p^{n+1}_h - p^n_h) ,\widetilde{\boldsymbol{w}}^{n+1}_{u,r})  + (\nabla(p^{n}_r - p^{n+1}_r) ,\widetilde{\boldsymbol{w}}^{n+1}_{u,r}) + (\nabla\eta^{n+1}_{p,r} ,\widetilde{\boldsymbol{w}}^{n+1}_{u,r}),\\
(\nabla\cdot \widetilde{\boldsymbol{w}}^{n+1}_{u,r}, w^{n+1}_{p,r}) &+ \Delta t\|\nabla w^{n+1}_{p,r}\|^2_0 = (\nabla\cdot\widetilde{\boldsymbol{\eta}}^{n+1}_{u,r}, \widetilde{\boldsymbol{w}}^{n+1}_{u,r}) + \Delta t(\nabla\eta^{n+1}_{p,r}, \nabla w^{n+1}_{p,r}).
\end{aligned}
\right.
$$
Adding both equations we have the following error equation
\begin{equation}\label{Equa-Temp-6}
\begin{aligned}
\left(\frac{\widetilde{\boldsymbol{w}}^{n+1}_{u,r} - \widetilde{\boldsymbol{w}}^{n}_{u,r}}{\Delta t}, \widetilde{\boldsymbol{w}}^{n+1}_{u,r}\right) &+ \nu\|\nabla\widetilde{\boldsymbol{w}}^{n+1}_{u,r}\|^2_0 + \Delta t\|\nabla w^{n+1}_{p,r}\|^2_0 = \left(\frac{\widetilde{\boldsymbol{\eta}}^{n+1}_{u,r} - \widetilde{\boldsymbol{\eta}}^{n}_{u,r}}{\Delta t} ,\widetilde{\boldsymbol{w}}^{n+1}_{u,r}\right),  \\
& + \nu(\nabla\widetilde{\boldsymbol{\eta}}^{n+1}_{u,r} ,\nabla \widetilde{\boldsymbol{w}}^{n+1}_{u,r}) + (\nabla(p^{n+1}_h - p^n_h) ,\widetilde{\boldsymbol{w}}^{n+1}_{u,r}) + (\nabla(p^{n}_r - p^{n+1}_r) ,\widetilde{\boldsymbol{w}}^{n+1}_{u,r}), \\
&  + (\nabla\eta^{n+1}_{p,r} ,\widetilde{\boldsymbol{w}}^{n+1}_{u,r}) + (\nabla\cdot\widetilde{\boldsymbol{\eta}}^{n+1}_{u,r}, \widetilde{\boldsymbol{w}}^{n+1}_{u,r}) + \Delta t(\nabla\eta^{n+1}_{p,r}, \nabla w^{n+1}_{p,r}), \\
& := A_1 + A_2 + A_3 + A_4 + A_5 + A_6 + A_7.
\end{aligned}
\end{equation}
We will bound the seven terms above separately.
$$
\begin{aligned}
|A_1| &= \left|\left(\frac{\widetilde{\boldsymbol{\eta}}^{n+1}_{u,r} - \widetilde{\boldsymbol{\eta}}^{n}_{u,r}}{\Delta t},\widetilde{\boldsymbol{w}}^{n+1}_{u,r}\right) \right| \leq C\left\|\frac{\widetilde{\boldsymbol{\eta}}^{n+1}_{u,r} - \widetilde{\boldsymbol{\eta}}^{n}_{u,r}}{\Delta t} \right\|^2_0 + \frac{1}{10} \|\widetilde{\boldsymbol{w}}^{n+1}_{u,r}\|^2_0.
\end{aligned}
$$
For the first term above, we utilize the expression of $\widetilde{\boldsymbol{u}}^{n}_h$ and $\Pi^v_r\widetilde{\boldsymbol{u}}^{n}_h$ in the form of POD bases $\boldsymbol{\varphi}_i$ to get
$$
\begin{aligned}
\widetilde{\boldsymbol{\eta}}^{n+1}_{u,r} = \widetilde{\boldsymbol{u}}^{n+1}_h - \Pi^v_r\widetilde{\boldsymbol{u}}^{n+1}_h = \sum^{d_{\widetilde{u}}}_{k=1}\left(\widetilde{\boldsymbol{u}}^{n+1}_h,\boldsymbol{\varphi}_k\right)\boldsymbol{\varphi}_k - \sum^{r}_{k=1}\left(\widetilde{\boldsymbol{u}}^{n+1}_h,\boldsymbol{\varphi}_k\right)\boldsymbol{\varphi}_k = \sum^{d_{\widetilde{u}}}_{k=r+1}\left(\widetilde{\boldsymbol{u}}^{n+1}_h,\boldsymbol{\varphi}_k\right)\boldsymbol{\varphi}_k.
\end{aligned}
$$
Then
$$
\begin{aligned}
\left\|\frac{\widetilde{\boldsymbol{\eta}}^{n+1}_{u,r} - \widetilde{\boldsymbol{\eta}}^{n}_{u,r}}{\Delta t}\right\|^2_0 &= \left\|\frac{1}{\Delta t}\left(\sum^{d_{\widetilde{u}}}_{k=r+1}\left(\widetilde{\boldsymbol{u}}^{n+1}_h,\boldsymbol{\varphi}_k\right)\boldsymbol{\varphi}_k - \sum^{d_{\widetilde{u}}}_{k=r+1}\left(\widetilde{\boldsymbol{u}}^{n}_h,\boldsymbol{\varphi}_k\right)\boldsymbol{\varphi}_k \right)\right\|^2_0, \\
&= \left\|\sum^{d_{\widetilde{u}}}_{k=r+1}\left(\frac{\widetilde{\boldsymbol{u}}^{n}_h - \widetilde{\boldsymbol{u}}^{n}_h}{\Delta t},\boldsymbol{\varphi}_k\right)\boldsymbol{\varphi}_k\right\|^2_0, \\
&= \left\|\sum^{d_{\widetilde{u}}}_{k=r+1}\left(\partial\widetilde{\boldsymbol{u}}^{n+1}_h,\boldsymbol{\varphi}_k\right)\boldsymbol{\varphi}_k\right\|^2_0, \\
&= \left\|\partial\widetilde{\boldsymbol{u}}^{n+1}_h - \Pi^v_r\partial\widetilde{\boldsymbol{u}}^{n+1}_h\right\|^2_0.
\end{aligned}
$$
For other terms on the right-side hand of (\ref{Equa-Temp-6}),
$$
\begin{aligned}
|A_2| &= \left|\nu(\nabla\widetilde{\boldsymbol{\eta}}^{n+1}_{u,r} ,\nabla \widetilde{\boldsymbol{w}}^{n+1}_{u,r}) \right| \leq C\nu\|\nabla\widetilde{\boldsymbol{\eta}}^{n+1}_{u,r}\|^2_0 + \frac12\nu\|\nabla \widetilde{\boldsymbol{w}}^{n+1}_{u,r}\|^2_0 \leq C\nu\sum^{d_{\widetilde{u}}}_{i=r+1}\gamma_i\|\nabla\varphi_i\|^2_0 + \frac12\nu\|\nabla \widetilde{\boldsymbol{w}}^{n+1}_{u,r}\|^2_0,\\
|A_3| &= \left|(\nabla(p^{n+1}_h - p^n_h) ,\widetilde{\boldsymbol{w}}^{n+1}_{u,r}) \right| \leq C\|\nabla(p^{n+1}_h - p^n_h)\|^2_0 + \frac{1}{10}\|\widetilde{\boldsymbol{w}}^{n+1}_{u,r}\|^2_0 \leq C\Delta t\int^{t_{n+1}}_{t_n}\|\nabla(\partial_{t}p_h)\|^2_0\,{\rm d}t + \frac{1}{10}\|\widetilde{\boldsymbol{w}}^{n+1}_{u,r}\|^2_0,\\
|A_4| &= \left| (\nabla(p^{n}_r - p^{n+1}_r) ,\widetilde{\boldsymbol{w}}^{n+1}_{u,r})\right| \leq C\|\nabla(p^{n+1}_r - p^n_r)\|^2_0 + \frac{1}{10}\|\widetilde{\boldsymbol{w}}^{n+1}_{u,r}\|^2_0 \leq C\Delta t\int^{t_{n+1}}_{t_n}\|\nabla(\partial_{t}p_r)\|^2_0\,{\rm d}t + \frac{1}{10}\|\widetilde{\boldsymbol{w}}^{n+1}_{u,r}\|^2_0,\\
|A_5| &= \left|(\nabla\eta^{n+1}_{p,r} ,\widetilde{\boldsymbol{w}}^{n+1}_{u,r}) \right| \leq C\|\nabla\eta^{n+1}_{p,r}\|^2_0 + \frac{1}{10}\|\widetilde{\boldsymbol{w}}^{n+1}_{u,r}\|^2_0 \leq C\sum^{d_p}_{i=r+1}\epsilon_i\|\nabla\psi_i\|^2_0 + \frac{1}{10}\|\widetilde{\boldsymbol{w}}^{n+1}_{u,r}\|^2_0,\\
|A_6| &= \left|(\nabla\cdot\widetilde{\boldsymbol{\eta}}^{n+1}_{u,r}, \widetilde{\boldsymbol{w}}^{n+1}_{u,r}) \right| \leq C|(\nabla\widetilde{\boldsymbol{\eta}}^{n+1}_{u,r}, \widetilde{\boldsymbol{w}}^{n+1}_{u,r})| \leq C\|\nabla\widetilde{\boldsymbol{\eta}}^{n+1}_{u,r}\|^2_0 + \frac{1}{10}\|\widetilde{\boldsymbol{w}}^{n+1}_{u,r}\|^2_0 \leq C\sum^{d_{\widetilde{u}}}_{i=r+1}\gamma_i\|\nabla\varphi\|^2_0 + \frac{1}{10}\|\widetilde{\boldsymbol{w}}^{n+1}_{u,r}\|^2_0,\\
|A_7| &= \left|\Delta t(\nabla\eta^{n+1}_{p,r}, \nabla w^{n+1}_{p,r}) \right| \leq \frac{1}{2}\Delta t\|\nabla\eta^{n+1}_{p,r}\|^2_0 + \frac{1}{2}\Delta t\|\nabla w^{n+1}_{p,r}\|^2_0 \leq C\Delta t\sum^{d_p}_{i=r+1}\epsilon_i\|\nabla\psi_i\|^2_0 + \frac{1}{2}\Delta t\|\nabla w^{n+1}_{p,r}\|^2_0.
\end{aligned}
$$
Combining with all the seven terms' results, we have
$$
\begin{aligned}
\left(\frac{\widetilde{\boldsymbol{w}}^{n+1}_{u,r} - \widetilde{\boldsymbol{w}}^{n}_{u,r}}{\Delta t}, \widetilde{\boldsymbol{w}}^{n+1}_{u,r}\right) &+ \frac12\nu\|\nabla\widetilde{\boldsymbol{w}}^{n+1}_{u,r}\|^2_0 + \frac12\Delta t\|\nabla w^{n+1}_{p,r}\|^2_0  \leq \left\|\partial\widetilde{\boldsymbol{u}}^{n+1}_h - \Pi^v_r\partial\widetilde{\boldsymbol{u}}^{n+1}_h\right\|^2_0,  \\
&\quad + C\Delta t\int^{t_{n+1}}_{t_n}\Big(\|\partial_{t}p_h\|^2_0 + \|\partial_{t}p_r\|^2_0\Big)\,{\rm d}t + C(\nu + 1)\sum^{d_{\widetilde{u}}}_{i=r+1}\gamma_i\|\nabla\varphi_i\|^2_0, \\
&\quad + C(\Delta t + 1)\sum^{d_p}_{i=r+1}\epsilon_i\|\nabla\psi_i\|^2_0 + \frac12\|\widetilde{\boldsymbol{w}}^{n+1}_{u,r}\|^2_0.
\end{aligned}
$$
Multiplying by $\Delta t$, adding with $n$ from $n_0$ to $N-1$ and rearranging, we get
%use the same techniques just as done in Lemma \ref{Lemma-ErrorAnaly-Proj-PSPG-Velo} to bound the above first two terms, and use a intermediate result in the estimation of pressure $L^2$ norm, i.e. Lemma \ref{Equa-Temp-2}, to bound the above third term,
$$
\begin{aligned}
\|\widetilde{\boldsymbol{w}}^{N}_{u,r}\|^2_0 &+ \Delta t\sum^{N-1}_{n=n_0}\|\widetilde{\boldsymbol{w}}^{n+1}_{u,r} - \widetilde{\boldsymbol{w}}^{n}_{u,r}\|^2_0 + \nu\Delta t\sum^{N-1}_{n=n_0}\|\nabla\widetilde{\boldsymbol{w}}^{n+1}_{u,r}\|^2_0 + \Delta t^2\sum^{N-1}_{n=n_0}\|\nabla w^{n+1}_{p,r}\|^2_0, \\
&\quad  \leq \Delta t\sum^{N-1}_{n=n_0}\left\|\partial\widetilde{\boldsymbol{u}}^{n+1}_h - \Pi^v_r\partial\widetilde{\boldsymbol{u}}^{n+1}_h\right\|^2_0 + C(\nu + 1)\sum^{d_{\widetilde{u}}}_{i=r+1}\gamma_i\|\nabla\varphi_i\|^2_0 + \frac12\Delta t\sum^{N-1}_{n=n_0}\|\widetilde{\boldsymbol{w}}^{n+1}_{u,r}\|^2_0, \\
&\quad + C(\Delta t + 1)\sum^{d_p}_{i=r+1}\epsilon_i\|\nabla\psi_i\|^2_0 + C\Delta t^2\int^{T}_{t_{n_0}}\Big(\|\partial_{t}p_h\|^2_0 + \|\partial_{t}p_r\|^2_0\Big)\,{\rm d}t.
\end{aligned}
$$
%For the last term above, we use Lemma \ref{Lemma-L2-z} by replacing $z,z_h$ with $p_t,(p_h)_t$, and use the regularity of $p_t$, we have
%$$
%\begin{aligned}
%\Delta t^2\int^{T}_{0}\|(p_h)_t\|^2_0\,{\rm d}t &\leq \Delta t^2\int^{T}_{0}\|p_t - (p_h)_t\|^2_0\,{\rm d}t + \Delta t^2\int^{T}_{0}\|p_t\|^2_0\,{\rm d}t \\
%&\leq C\Delta t^2\max_{t}\|p_t - (p_h)_t\|^2_0 + C\Delta t^2 \\
%&\leq C\Delta t^2h^2 + C\Delta t^2.
%\end{aligned}
%$$
For the first term on the right-hand side above, we use Lemma \ref{Opti-TimeAver} to have
$$
\begin{aligned}
\Delta t\sum^{N-1}_{n=n_0}\left\|\partial\widetilde{\boldsymbol{u}}^{n+1}_h - \Pi^v_r\partial\widetilde{\boldsymbol{u}}^{n+1}_h\right\|^2_0 \leq C\frac{1}{M}\sum^{M}_{k=1}\|\partial\widetilde{\boldsymbol{u}}^k_h - \Pi^v_r\partial\widetilde{\boldsymbol{u}}^k_h\|^2_0 \leq C\sum^{d_{\widetilde{u}}}_{i=r+1}\gamma_i.
\end{aligned}
$$
By discrete Gronwall inequality and using stability result to bound the last term above, we get
\begin{equation}
\begin{aligned}
\|\widetilde{\boldsymbol{w}}^{N}_{u,r}\|^2_0 &+ \Delta t\sum^{N-1}_{n=n_0}\|\widetilde{\boldsymbol{w}}^{n+1}_{u,r} - \widetilde{\boldsymbol{w}}^{n}_{u,r}\|^2_0 + \nu\Delta t\sum^{N-1}_{n=n_0}\|\nabla\widetilde{\boldsymbol{w}}^{n+1}_{u,r}\|^2_0 + \Delta t^2\sum^{N-1}_{n=n_0}\|\nabla w^{n+1}_{p,r}\|^2_0, \\
&\quad  \leq C\left(\sum^{d_{\widetilde{u}}}_{i=r+1}\gamma_i\Big(1 + (\nu + 1)\|\nabla\varphi_i\|^2_0\Big) + (\Delta t + 1)\sum^{d_p}_{i=r+1}\epsilon_i\|\nabla\psi_i\|^2_0 + \Delta t^2\right).
%\nu\|\nabla(\boldsymbol{s}^{n+1}_r &- \widetilde{\boldsymbol{u}}^{n+1}_h)\|^2_{\ell^2(L^2)} + \Delta t\|\nabla(z^{n+1}_r - p^{n+1}_h)\|^2_{\ell^2(L^2)} \\
% &\leq C\cdot\left[ \sum^{d_{\widetilde{u}}}_{i=r+1}\gamma_i\left(\frac{1}{\nu} + \nu\|\nabla\varphi_i\|^2_0 \right) + \sum^{d_p}_{i=r+1}\epsilon_i\left(\frac{1}{\nu} + \Delta t\|\nabla\psi_i\|^2_0\right) \right. \\
% &\quad\left. + \Delta t^2(\frac{1}{\nu} + 1)(h^2+1) + h^2 \right]
\end{aligned}
\end{equation}
\end{proof}

Finally, by triangle inequality, Lemma \ref{Lemma-FE-ConvAnalyVeloPress} and Lemma \ref{Lemma-projFEsolu-projPODsolu}, we obtain the following theorem which states the convergence between continuous variational form and POD projection scheme.
\begin{Theorem}\label{POD-ConvAnaly}
(\textbf{error estimate for Proj-POD-ROM}) For $n=n_0,n_0+1,\cdots,N$, let $(\boldsymbol{u}^n,p^n)$ is the solution of (\ref{ContVari}) at $t=t_n$, $(\widetilde{\boldsymbol{u}}^n_r,p^n_r)$ denotes the POD-based reduced-order solutions obtained in (\ref{Pro-POD}), then we have the following convergence estimate:
\begin{equation}
\begin{aligned}
\left\|\boldsymbol{u}^N - \widetilde{\boldsymbol{u}}^N_r\right\|^2_0 &+ \nu\Delta t\sum^{N-1}_{n=n_0}\left\|\nabla(\widetilde{\boldsymbol{u}}^n - \widetilde{\boldsymbol{u}}^n_r)\right\|^2_{L^2} + \Delta t^2\sum^{N-1}_{n=n_0}\left\|\nabla(p - p_r)\right\|^2_{L^2}, \\
&\leq C\left(\sum^{d_{\widetilde{u}}}_{i=r+1}\gamma_i\Big(1 + (\nu + 1)\left\|\nabla\varphi_i\right\|^2_0\Big) + (\Delta t + 1)\sum^{d_p}_{i=r+1}\epsilon_i\left\|\nabla\psi_i\right\|^2_0 + \Delta t^2 + h^2\right).
\end{aligned}
\end{equation}
\end{Theorem}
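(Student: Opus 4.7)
The plan is to reduce the result to the two error estimates already at our disposal by inserting the Proj-PSPG-FE intermediate as a pivot. For every $n$ with $n_0\le n\le N$, I would write
\[
\boldsymbol{u}^n-\widetilde{\boldsymbol{u}}^n_r=\bigl(\boldsymbol{u}^n-\widetilde{\boldsymbol{u}}^n_h\bigr)+\bigl(\widetilde{\boldsymbol{u}}^n_h-\widetilde{\boldsymbol{u}}^n_r\bigr),\qquad
p^n-p^n_r=\bigl(p^n-p^n_h\bigr)+\bigl(p^n_h-p^n_r\bigr),
\]
apply the elementary inequality $(a+b)^2\le 2a^2+2b^2$, and organize the three left-hand quantities of the theorem as the sum of an ``FE contribution'' and a ``POD truncation contribution''. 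The POD truncation contribution is controlled verbatim by Lemma~\ref{Lemma-projFEsolu-projPODsolu}, so the bulk of the work is to show that the FE contribution is absorbed into $C(\Delta t^2+h^2)$.

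For the FE contribution, I would apply Lemma~\ref{Lemma-FE-ConvAnalyVeloPress} in three places. The pointwise bound $\|\boldsymbol{u}^N-\widetilde{\boldsymbol{u}}^N_h\|_0\le C(h^2+\Delta t)$ immediately gives $\|\boldsymbol{u}^N-\widetilde{\boldsymbol{u}}^N_h\|_0^2\le C(h^4+\Delta t^2)$, which is already of the desired form. For the gradient velocity term I would use the summed estimate $\|\nabla(\boldsymbol{u}-\widetilde{\boldsymbol{u}}_h)\|_{\ell^2(L^2)}\le C(h+\sqrt{\Delta t})$, yielding $\nu\Delta t\sum_{n=n_0}^{N-1}\|\nabla(\boldsymbol{u}^n-\widetilde{\boldsymbol{u}}^n_h)\|_0^2\le C\nu(h^2+\Delta t)$; since (\ref{Delt-h2}) forces $\Delta t\sim h^2$, this is bounded by $C(h^2+\Delta t^2)$.

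The delicate step is the $\Delta t^2\sum\|\nabla(p^n-p^n_h)\|_0^2$ piece, since Lemma~\ref{Lemma-FE-ConvAnalyVeloPress} only controls $\|\nabla(p^n-p^n_h)\|_0$ in the weighted pointwise form $h\sqrt{\Delta t}\|\nabla(p^n-p^n_h)\|_0\le C(h^2+\Delta t)$. I would square this to get $h^2\Delta t\|\nabla(p^n-p^n_h)\|_0^2\le C(h^2+\Delta t)^2$, then multiply by $\Delta t/h^2$ to obtain $\Delta t^2\|\nabla(p^n-p^n_h)\|_0^2\le C(\Delta t/h^2)(h^2+\Delta t)^2$. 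Invoking (\ref{Delt-h2}) to bound $\Delta t/h^2$ and $(h^2+\Delta t)^2$ by $Ch^2$ (respectively by constants times $h^4$), and finally summing $N\sim T/\Delta t$ terms, collapses this contribution into $C h^2$, again of the required form. This is the step that most directly relies on the scaling $\Delta t=\mathcal{O}(h^2)$, and I expect it to be the only real obstacle; everything else is a direct combination.

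Adding the bounded FE contribution to the POD truncation bound from Lemma~\ref{Lemma-projFEsolu-projPODsolu} produces exactly
\[
C\Bigl(\sum_{i=r+1}^{d_{\widetilde u}}\gamma_i\bigl(1+(\nu+1)\|\nabla\varphi_i\|_0^2\bigr)+(\Delta t+1)\sum_{i=r+1}^{d_p}\epsilon_i\|\nabla\psi_i\|_0^2+\Delta t^2+h^2\Bigr),
\]
which is the claimed bound. Throughout, the constants $C$ absorb $T$, $\nu$ and the (assumed fixed) regularity constants of $(\boldsymbol{u},p)$.
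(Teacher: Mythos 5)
Your proposal is correct and follows exactly the route the paper takes: the paper derives Theorem~\ref{POD-ConvAnaly} in one line by the triangle inequality combined with Lemma~\ref{Lemma-FE-ConvAnalyVeloPress} and Lemma~\ref{Lemma-projFEsolu-projPODsolu}, which is precisely your decomposition through the finite element solution as pivot. Your added details (in particular the use of $\Delta t=\mathcal{O}(h^2)$ to absorb the weighted pressure-gradient term into $Ch^2$) are sound and actually spell out steps the paper leaves implicit.
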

\section{Numerical Tests}
In this section, we present some numerical experiments to confirm the $a\,priori$ error estimates derived in Theorem \ref{POD-ConvAnaly} for the POD projection scheme 4 - Proj-POD-ROM. To this end, we first determine some necessary parameters used before and verify the convergence in Lemma \ref{Lemma-FE-ConvAnalyVeloPress} and Lemma \ref{Lemma-FEL2EndVelo} of the finite element projection scheme, since with which we formed our POD snapshot spaces, and also the convergence result in Theorem \ref{POD-ConvAnaly} is closely related to the one of the finite element scheme. The open-source finite element package iFEM \cite{iFEM} has been used to run the numerical experiments.

\subsection{Problem setting}
We take $\Omega=[0,1]\times[0,1]\subset \mathbb{R}^2$ and the time interval $(0,1]$ with the viscosity coefficient $\nu=1$ and the prescribed solution
$$
%\left\{
\begin{aligned}
\boldsymbol{u} &=\cos(t)\cdot \left(
                   \begin{array}{c}
                     \pi \sin(\pi x)^2\sin(2\pi y)  \\
                     -\pi \sin(2\pi x)\sin(\pi y)^2 \\
                   \end{array}
                 \right),\\
p &= \cos(t)\cdot 10\cos(\pi x)\cos(\pi y).
\end{aligned}
%\right.
$$
The right-hand side, the Dirichlet boundary condition and the initial condition are chosen in accordance to the above prescribed solution. \\
\indent We set $\Delta t=0.1h^2$ and all grids are regular $N\times N$ triangular grids with SWNE diagonals for different $N$(i.e., diagonals coming from connecting the southwest and the northeast vertexes on all rectangles), and we take $N=4,8,16,32,64$ sequentially. For simplicity, we use $P^1$-$P^1$ element pair for spatial discretization, and we take the snapshots on the finest computational mesh, i.e., h=1/64, and show this mesh in Figure \ref{FinestMesh}.  \\
\indent When constructing the snapshots spaces, we take into account the fact that the finite element projection scheme involves the initial pressure $p^0_h$ which is not the part of the definition of the problem, so we take $p^0_h=0$ and it might be good to think of the numerical discrete errors of pressure $\|p^n-p^n_h\|_{L^2}$ in the previous steps $n$ are so large that it is not suitable to take finite element solution on those steps into snapshot spaces, thus we choose the finite element solution $(\boldsymbol{u}^n_h,p^n_h)$ and its difference quotients $(\partial\boldsymbol{u}^n_h,\partial p^n_h)$ from $n=n_0=6$ to $n=25$ to formulate snapshots spaces, which means we take $M=20$ and thus the number of snapshots is $N_s=2M-1=39$. \\
\indent The finite element solution $(\boldsymbol{u}^n_{h}, \widetilde{\boldsymbol{u}}^n_j,p^n_h)$ and exact solution $(\boldsymbol{u}^n,p^n)$ at discrete termination time $t=\Delta t\cdot N$ on finest mesh are shown in Figure \ref{VeloPressFigure}, and following POD bases are formed from snapshots via $L^2$ inner product.
%\begin{Remark}
%We remark that we don't specify a definite termination time $T$ in the time interval because of $\Delta t=\mathcal{O}(h^2)$ and also combining the scale of stabilized parameter makes $\Delta t=\mathcal{O}(10^{-7})$ on the finest mesh($h=1/128$), which means much running time on PC if we specify $T=1$ as usual.
%\end{Remark}
\begin{figure}
\centering
\includegraphics[width=8.75cm,height=6.5625cm]{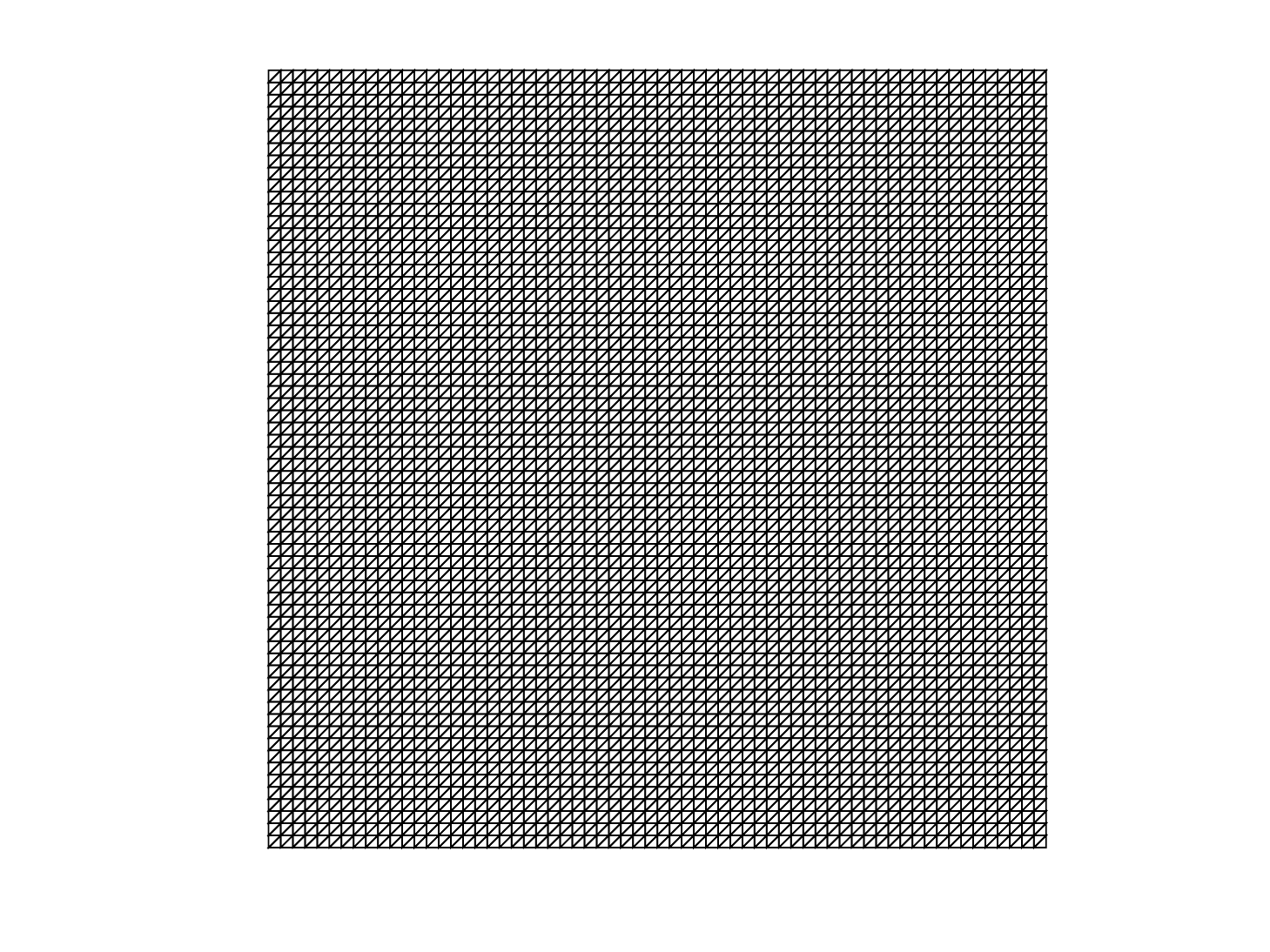}
\caption{Computational mesh with h=1/64.}\label{FinestMesh}
\end{figure}
%\begin{figure}
%\centering
%\includegraphics[width=13.125cm,height=9.84375cm]{velocity_u1_at_termination_time_on_finest_mesh}
%\caption{The first velocity component for finite element $u_{2,h}$(left) and exact $u_2$(right) at t=0.00012207 on finest mesh $h=1/128$.}\label{Velo1Figure}
%\end{figure}
\begin{figure}
\centering
\includegraphics[width=16.392cm,height=7.812cm]{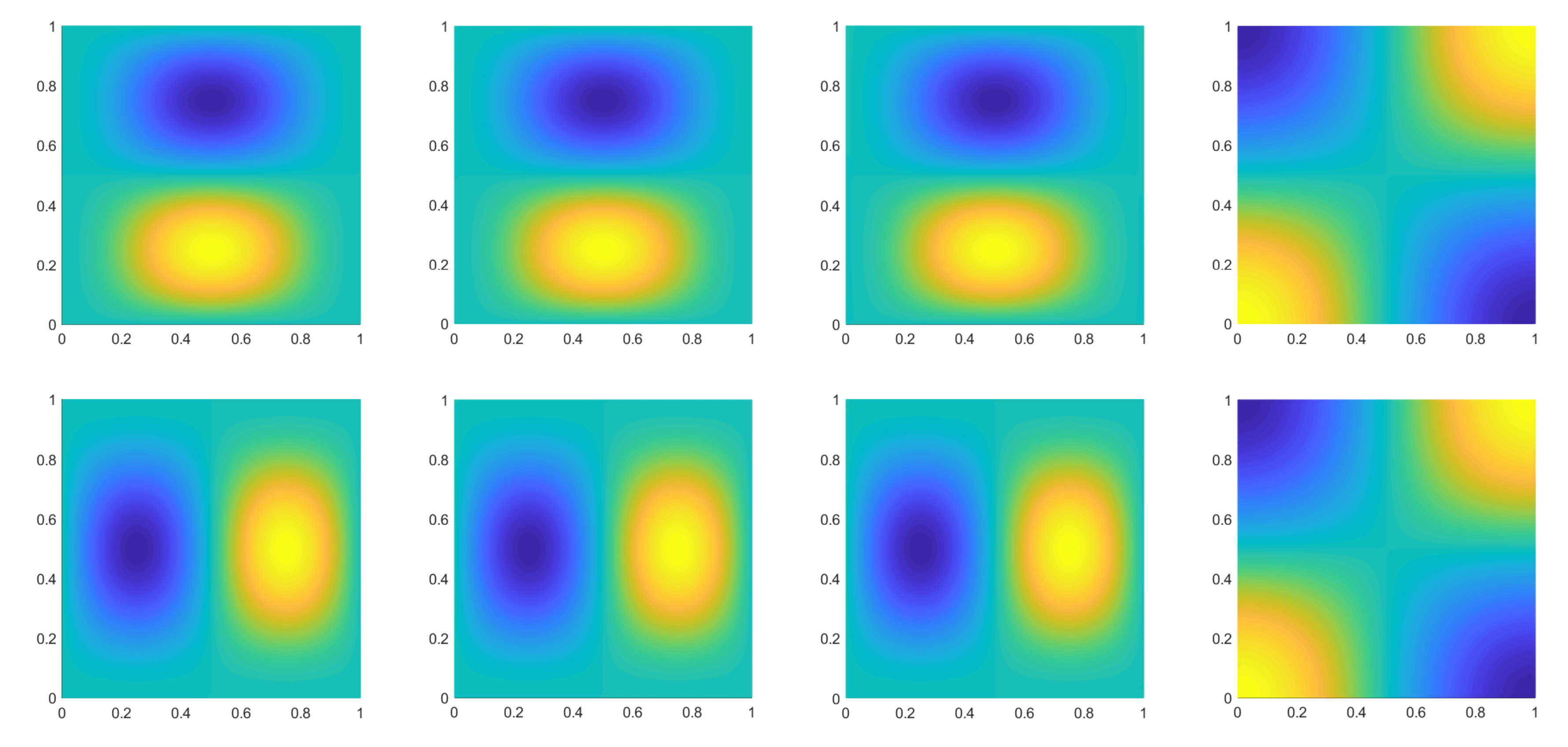}
\caption{The exact solution and finite element solution at $n=N$ on mesh of $h=1/64$. On upper row, from left to right: ($u^n_{1},\widetilde{u}^n_{1,h},u^n_{1,h},p^n$); on lower row, from left to right: ($u^n_{2},\widetilde{u}^n_{2,h},u^n_{2,h},p^n_h$).}\label{VeloPressFigure}
\end{figure}
%\begin{figure}
%\centering
%\includegraphics[width=13.125cm,height=9.84375cm]{pressure_p_at_termination_time_on_finest_mesh}
%\caption{Finite element pressure(left) $p_h$ and exact pressure(right) $p$ at $t=1$ on mesh of $h=1/64$.}\label{PresFigure}
%\end{figure}
\subsection{Convergence of the finite element projection scheme}
In this subsection, numerical tests will be used to numerically verify two theoretical results, that is Lemma \ref{Lemma-FE-ConvAnalyVeloPress} and Lemma \ref{Lemma-FEL2EndVelo}, with which we utilize to get convergence analysis Theorem \ref{POD-ConvAnaly} of POD scheme 4. Although Lemma \ref{Lemma-FE-ConvAnalyVeloPress} has been analyzed in \cite{Frutos-ModifiedProjection-2018-AMC}, it lacks the corresponding numerical verifications, so we supplement the numerical tests about Lemma \ref{Lemma-FE-ConvAnalyVeloPress} and also provide that of the Lemma \ref{Lemma-FEL2EndVelo}, which is newly analyzed in this paper. Since $\Delta t=\mathcal{O}(h^2)$, then from the Table \ref{FEVeloConvOrder} and Table \ref{FEPressConvOrder} we can see
$$
\begin{aligned}
\|\nabla(\boldsymbol{u} - \boldsymbol{\widetilde{u}}_h)\|_{l^2(L^2)} + \|p-p_h\|_{l^2(L^2)} &= \mathcal{O}(h), \\
\|\boldsymbol{u}^n - \widetilde{\boldsymbol{u}}^n_h\|_0 + h\|p^n - p^n_h\|_0 + h\sqrt{\Delta t}\|\nabla(p^n - p^n_h)\|_0 &= \mathcal{O}(h^2).
\end{aligned}
$$
which are what the theoretical results in Lemma \ref{Lemma-FE-ConvAnalyVeloPress} shows. Especially, we can see from Table \ref{FEVeloConvOrder}, $\left\|\boldsymbol{u}^n - \boldsymbol{u}^n_h\right\|_{L^2} = \mathcal{O}(h^2)$, which is in accordance of Lemma \ref{Lemma-FEL2EndVelo}.
%
%$\left\|p - p_h\right\|_{\ell^2(L^2)} = \mathcal{O}(h)$, which is in accordance with Lemma \ref{Lemma-FE-ConvAnalyVeloPress} in the context of . From the other three convergence orders of three errors verified separately in Table \ref{FEConvOrder}, we show that the numerical experiment indicates that the sum of three errors has the $\mathcal{O}(h)$ as a whole, which is what we have analyzed in Theorem \ref{FE-ConvAnalyVeloPress} when $\Delta t=\mathcal{O}(h^2)$.
\begin{table}[htbp!]
  \centering
  \fontsize{10}{10}
  \begin{threeparttable}
  \caption{Errors and convergence orders of finite element solution $\widetilde{\boldsymbol{u}}^n_h,\boldsymbol{u}^n_h$ with $P^1$-$P^1$ pair.}\label{FEVeloConvOrder}
    \begin{tabular}{c|c|c|c|c|c|c}
    \toprule
    \multirow{2}{*}{1/h} & \multicolumn{2}{c}{$\max\limits_{n}\left\|\boldsymbol{u}^n - \widetilde{\boldsymbol{u}}^n_h\right\|_{L^2}$} & \multicolumn{2}{c}{$\max\limits_{n}\left\|\boldsymbol{u}^n - \boldsymbol{u}^n_h\right\|_{L^2}$} & \multicolumn{2}{c}{$\left\|\nabla(\boldsymbol{u} - \widetilde{\boldsymbol{u}}_h)\right\|_{\ell^2(L^2)}$}  \cr
    \cmidrule(lr){2-3} \cmidrule(lr){4-5} \cmidrule(lr){6-7}
    & error & rate & error & rate & error & rate \cr
    \midrule
    4    & \quad5.3013e$-$01$\quad$  & \quad-$\quad$     & \quad4.6509e$-$01 $\quad$   & \quad- $\quad$    & \quad4.8187e$+$00 $\quad$ & \quad- $\quad$        \cr
    8    & \quad1.6490e$-$01$\quad$  & 1.7078   & 1.4931e$-$01   & 1.6392  & 2.6626e$+$00 & 0.85582  \cr
    16   & 4.3368e$-$02              & 1.9259   & 4.0108e$-$02   & 1.8963  & 1.3785e$+$00 & 0.94976  \cr
    32   & 1.0969e$-$02              & 1.9828   & 1.0527e$-$02   & 1.9298  & 7.1098e$-$01 & 0.95516  \cr
    64   & 2.7499e$-$03              & 1.9960   & 2.8273e$-$03   & 1.8966  & 3.7409e$-$01 & 0.92642  \cr
    \bottomrule
    \end{tabular}
    \end{threeparttable}
\end{table}
\begin{table}[htbp!]
  \centering
  \fontsize{10}{10}
  \begin{threeparttable}
  \caption{Errors and convergence orders of finite element pressure $p^n_h$ with $P^1$-$P^1$ pair.}\label{FEPressConvOrder}
    \begin{tabular}{c|c|c|c|c|c|c}
    \toprule
    \multirow{2}{*}{1/h} & \multicolumn{2}{c}{$\max\limits_{n}\|p^n - p^n_h\|_{L^2}$} & \multicolumn{2}{c}{$\left\|p - p_h\right\|_{\ell^2(L^2)}$} & \multicolumn{2}{c}{$\sqrt{\Delta t}\left\|\nabla(p-p_h)\right\|_{\ell^2(L^2)}$}  \cr
    \cmidrule(lr){2-3} \cmidrule(lr){4-5} \cmidrule(lr){6-7}
    & error & rate & error & rate & error & rate  \cr
    \midrule
    4    &  \quad 2.7636e$+$00 $\quad$  &  \quad - $\quad$  & \quad 2.2987e$+$00 $\quad$  &  \quad - $\quad$   &  \quad 1.4286e$+$00 $\quad$  &  \quad - $\quad$ \cr
    8    & 1.1144e$+$00  & 1.3103   & 8.8892e$-$01  & 1.3707 & 4.6814e$-$01 & 1.6096   \cr
    16   & 3.6664e$-$01  & 1.6038   & 2.7275e$-$01  & 1.7045 & 1.3827e$-$01 & 1.7595   \cr
    32   & 1.2463e$-$01  & 1.5567   & 8.1260e$-$02  & 1.7470 & 4.5158e$-$01 & 1.6144   \cr
    64   & 4.6335e$-$02  & 1.4275   & 2.5152e$-$02  & 1.6919 & 1.5553e$-$01 & 1.5378   \cr
    \bottomrule
    \end{tabular}
    \end{threeparttable}
\end{table}
\subsection{Convergence of projection POD}
After having the snapshots spaces, we determine $d_{\widetilde{u}}=\text{rank}(\widetilde{\mathcal{U}})=20$(for which $\gamma_i<10^{-15},i>d_{\widetilde{u}}$), $d_p=\text{rank}(\mathcal{P})=20$(for which $\epsilon_i<10^{-13},i>d_p$). Figure \ref{POD_EigenvaluesandEnergy_up} shows the decay of POD eigenvalues(left) of velocity $\gamma_i,i=1,\cdots,d_{\widetilde{u}}$ and pressure $\epsilon_i,i=1,\cdots,d_{p}$, together with the corresponding captured system's energy(right) in the form of $100\sum^{r}_{i=1}\gamma_i/\sum^{d_{\widetilde{u}}}_{i=1}\gamma_i$ for velocity and $100\sum^{r}_{i=1}\epsilon_i/\sum^{d_p}_{i=1}\epsilon_i$ for pressure. We note that the first $r=4$ POD modes already capture more than $99.99\%$ of the system's velocity-pressure energy.\\
\indent In order to confirm the computational efficiency advantage of the POD-ROM scheme over the finite element FOM scheme, we take the number of POD modes is $r=4$, whereas the number of degree of freedoms(DOFs) for velocity and pressure in finite element scheme are 8450 and 4225. We show in Table \ref{ComparisonofFEPOD} the comparison of cumulative time spent by the two schemes to run to some certain time in the process of time-stepping iterative, and the $L^2$ numerical spatial discrete error of velocity and pressure in two schemes at that time. We can see from Table \ref{ComparisonofFEPOD} that, compared with the finite element FOM scheme, POD-ROM scheme can effectively improve computational efficiency in the context of only consuming approximately 1/6 times to get the numerical solution with even higher accuracy.\\
\indent Figure \ref{VeloPressRelaErrorDesc} plots the temporal evolution of the discrete $L^2$ relative error(in semilogarithmic scale) of the reduced-order velocity and pressure with respect to the full order ones:
$\|\widetilde{\boldsymbol{u}}^n_h-\widetilde{\boldsymbol{u}}^n_r\|_{L^2}/\|\widetilde{\boldsymbol{u}}^n_h\|_{L^2}$ and $\|p^n_h – p^n_r\|_{L^2}/\|p^n_h\|_{L^2}$ for different number of POD modes. As expected in Theorem \ref{POD-ConvAnaly}, the errors would decrease as the number of POD modes $r$ increasing. For velocity, the left figure in Figure \ref{VeloPressRelaErrorDesc} numerically demonstrate this result, and for pressure, the error would increase slightly when $r$ from $6$ to $8$, but when we continue to increase $r$, the error will decrease, which is consistent with the Theorem \ref{POD-ConvAnaly} as a whole.
\begin{figure}
\centering
\includegraphics[width=13.125cm,height=9.84375cm]{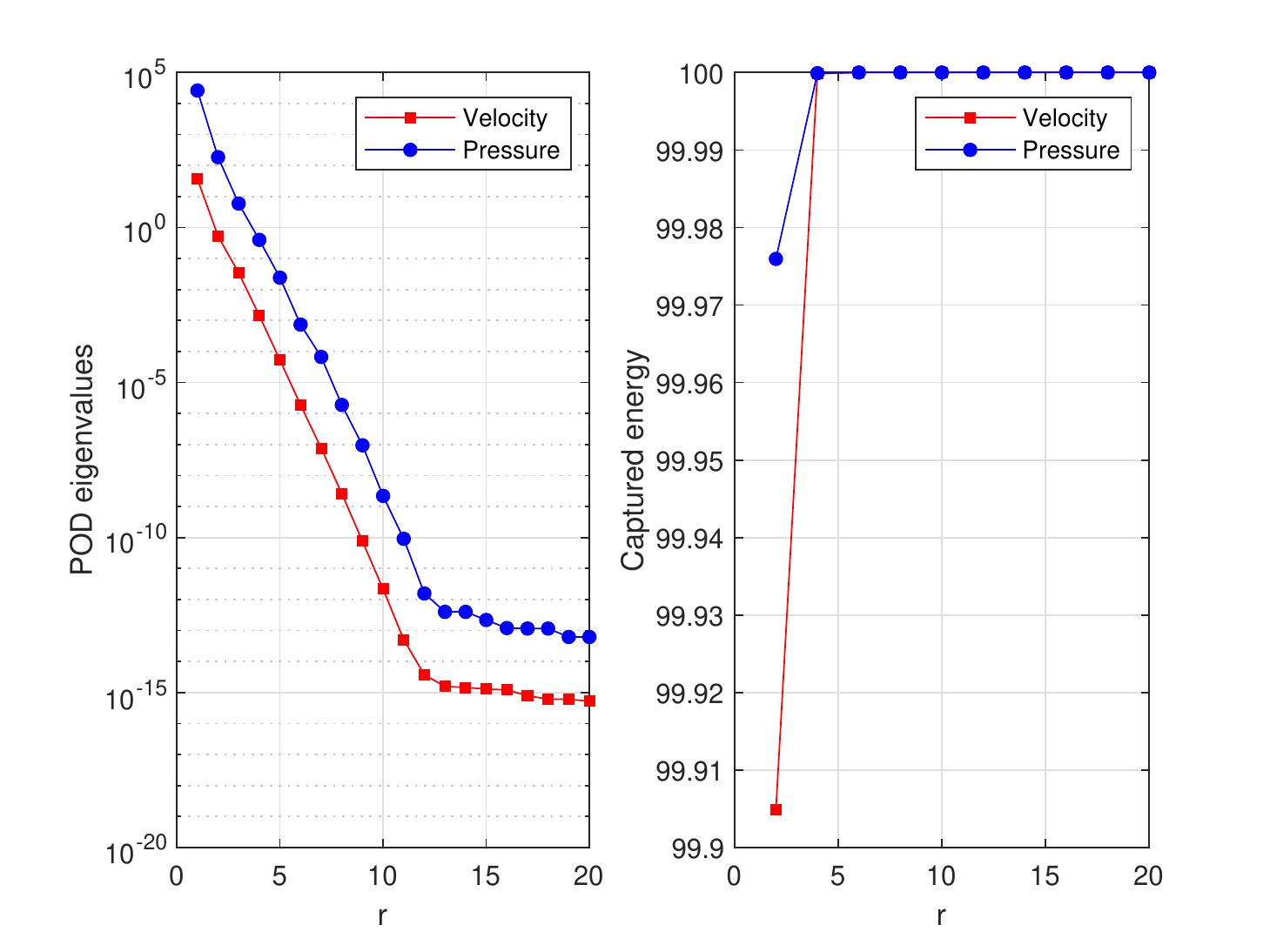}
\caption{POD velocity-pressure eigenvalues(left) and captured system's velocity-pressure energy(right).}\label{POD_EigenvaluesandEnergy_up}
\end{figure}

\begin{table}[htbp!]
  \centering
  \fontsize{10}{10}
  \begin{threeparttable}
  \caption{Comparison of errors and time consumed by time-stepping iterative between finite element scheme with $P^1$-$P^1$ pair and POD scheme with $r=4$ on the mesh of $h=1/64$ and $\Delta t=0.1h^2$.}\label{ComparisonofFEPOD}
    \begin{tabular}{ccccccc}
    \toprule
    \multirow{2}{*}{n} & \multicolumn{3}{c}{finite element scheme} & \multicolumn{3}{c}{POD scheme}  \cr
    \cmidrule(lr){2-4} \cmidrule(lr){5-7}
    & $\|\boldsymbol{u}^n \!-\! \widetilde{\boldsymbol{u}}^n_h\|_{L^2}$ & $\|p^n \!-\! p^n_h\|_{L^2}$ & CPU run time(s) & $\|\boldsymbol{u}^n \!-\! \widetilde{\boldsymbol{u}}^n_r\|_{L^2}$ & $\|p^n \!-\! p^n_r\|_{L^2}$ & CPU run time(s) \cr
    \midrule
    2500  & 2.3789e$-$03 & 2.9458e$-$02 & 689   & 2.2860e$-$03 & 2.7823e$-$02 & 113       \cr
    5000  & 2.3929e$-$03 & 2.9253e$-$02 & 1338  & 2.3007e$-$03 & 2.7642e$-$02 & 227       \cr
    7500  & 2.3740e$-$03 & 2.8975e$-$02 & 2005  & 2.2826e$-$03 & 2.7379e$-$02 & 339       \cr
    10000 & 2.3452e$-$03 & 2.8591e$-$02 & 2668  & 2.2549e$-$03 & 2.7015e$-$02 & 452       \cr
    20000 & 2.1443e$-$03 & 2.6010e$-$02 & 5339  & 2.0618e$-$03 & 2.4573e$-$02 & 914       \cr
    30000 & 1.8163e$-$03 & 2.1886e$-$02 & 8060  & 1.7464e$-$03 & 2.0673e$-$02 & 1399      \cr
    40000 & 1.3805e$-$03 & 1.6464e$-$02 & 10746 & 1.3274e$-$03 & 1.5547e$-$02 & 1900      \cr
    \bottomrule
    \end{tabular}
    \end{threeparttable}
\end{table}

\begin{figure}
\centering
\includegraphics[width=13.125cm,height=9.84375cm]{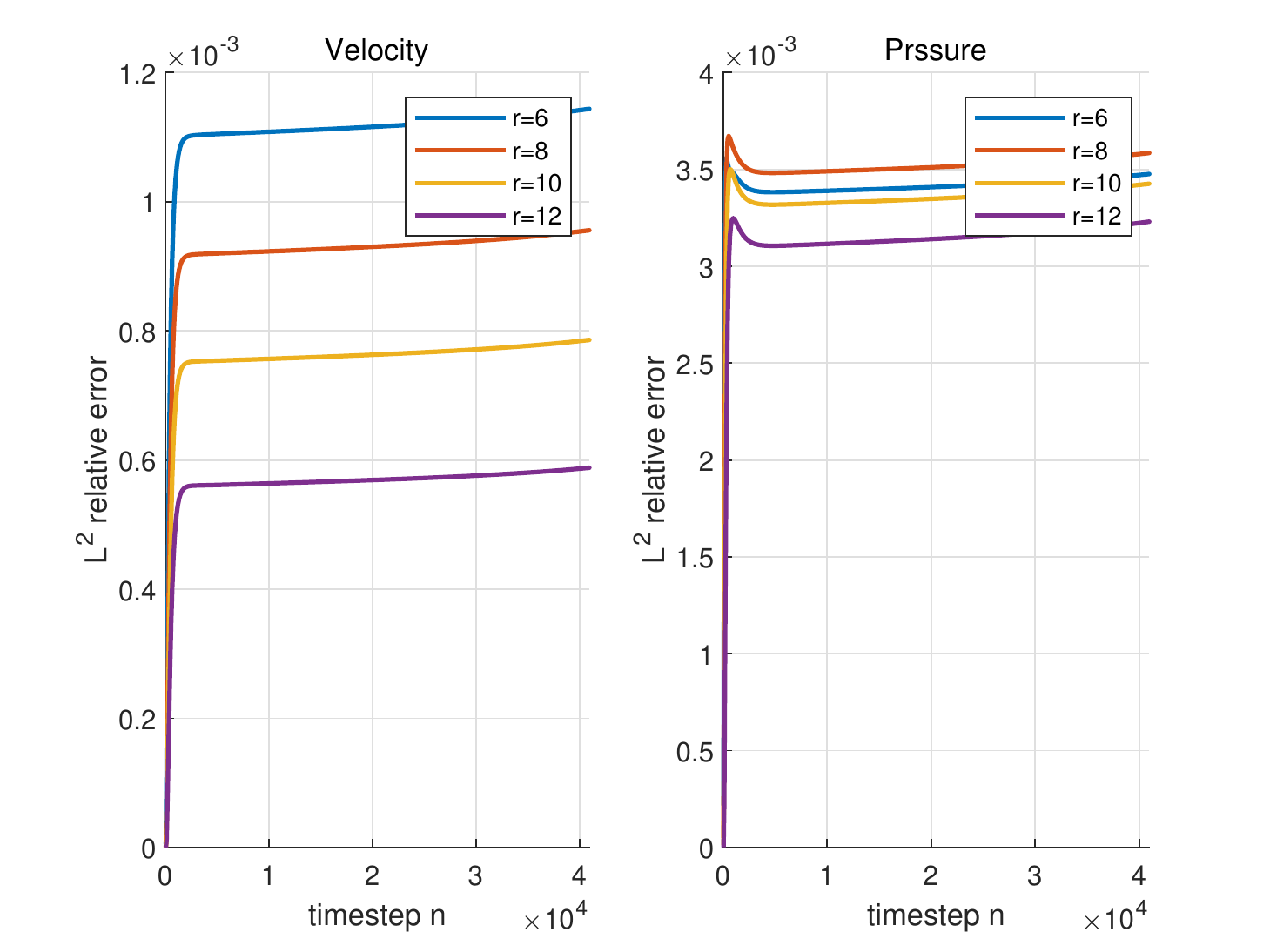}
\caption{Temporal evolution of $L^2$ relative error of velocity $\boldsymbol{u}^n_{r}$ with respect to $\boldsymbol{u}^n_{h}$(left) and velocity $p^n_{r}$ with respect to $p^n_{h}$(right) .}\label{VeloPressRelaErrorDesc}
\end{figure}

%\begin{figure}
%\centering
%\includegraphics[width=13.125cm,height=9.84375cm]{velocity_u2_RelaError_Desc}
%\caption{Temporal evolution of $L^2$ relative error of $u^n_{2,r}$ with respect to $u^n_{2,h}$.}\label{velocityu2RelaErrorDesc}
%\end{figure}

%\begin{figure}
%\centering
%\includegraphics[width=13.125cm,height=9.84375cm]{pressure_p_RelaError_Desc}
%\caption{Temporal evolution of $L^2$ relative error of $p^n_{r}$ with respect to $p^n_{h}$.}\label{pressurepRelaErrorDesc}
%\end{figure}

\section{Conclusions}
\noindent In this paper, we proposed an efficient projection POD-ROM, which combined the advantages of classical projection method and POD technique.\\
\indent The main contribution of the present paper consisted of two aspects: the first one was high computational efficiency. Through auxiliary intermediate velocity variable, the classical projection method decoupled the velocity variable and pressure variable, meanwhile decoupled the saddle-point system arose from Stokes equations, so one strength of projection method lied in its high computational efficiency, or low computational costs; Furthermore, POD technique was utilized to get the ROM, which have made the newly proposed projection POD-ROM had high computational efficiency. The second contribution was based on the fact that, in the fully discrete scheme of the classical projection method, the original scheme could be rewritten into a PSPG-type pressure stabilization scheme by eliminating the end-of-step velocity, where the pressure stabilized term $\Delta t(\nabla p^{n+1}_h, \nabla q^{n+1}_h)$, $\Delta t=\mathcal{O}(h^2)$, was inherent, so that some flexible mixed finite element spaces pairs(for example, $P^1$-$P^1$ pair) could be used without considering the classical LBB/inf-sup condition for mixed POD spaces, which was different from other stabilized FE-POD-ROM to overcome LBB/inf-sup condition by adding extra stabilization terms. \\
\indent Numerical experiments have been conducted to confirm the convergence for both projection finite element scheme and projection POD scheme. We first numerically confirmed the PSPG-type classical projection owned the desired convergence orders consistent with theoretical analysis, and then, finite element solutions were taken as the snapshots to formulate POD bases/modes which are used to get reduced-order velocity and pressure variables, so in theory, the discrete error between exact solutions and reduced-order solutions should converge to zero as increasing the number of POD modes, we numerically verify this result. Apart from the projection POD-ROM convergence, we also conduct the experiment to compare the error and computational time between projection finite element FOM and projection POD-ROM, and the results revealed the fact that projection POD-ROM not only had less discretization error, but also less computational costs, compared with projection finite element FOM.  \\
\indent One future research direction will be the applied of projection POD on nonlinear nonstationary Navier-Stokes equations. Another investigation is validation of fulfillment of LBB/inf-sup condition for mixed POD spaces.

\section*{Acknowledgment}
\noindent The authors thank the anonymous referees for their constructive comments and suggestions which improved the manuscript.

\bibliographystyle{plain}
\bibliography{Reference}

\begin{thebibliography}{10}

\bibitem{POD-SD-2021-JCP}
Mejdi Aza\"{\i}ez, Tom\'{a}s Chac\'{o}n~Rebollo, and Samuele Rubino.
\newblock A cure for instabilities due to advection-dominance in {POD} solution
  to advection-diffusion-reaction equations.
\newblock {\em J. Comput. Phys.}, 425:Paper No. 109916, 27, 2021.

\bibitem{POD-Supremizer-2015-IJNME}
Francesco Ballarin, Andrea Manzoni, Alfio Quarteroni, and Gianluigi Rozza.
\newblock Supremizer stabilization of {POD}-{G}alerkin approximation of
  parametrized steady incompressible {N}avier-{S}tokes equations.
\newblock {\em Internat. J. Numer. Methods Engrg.}, 102(5):1136--1161, 2015.

\bibitem{Brenner2008}
Susanne~C. Brenner and L.~Ridgway Scott.
\newblock {\em The mathematical theory of finite element methods}, volume~15 of
  {\em Texts in Applied Mathematics}.
\newblock Springer, New York, third edition, 2008.

\bibitem{PSPG-Burman-2011-CMAME}
Erik Burman and Miguel~A. Fern\'{a}ndez.
\newblock Analysis of the {PSPG} method for the transient {S}tokes' problem.
\newblock {\em Comput. Methods Appl. Mech. Engrg.}, 200(41-44):2882--2890,
  2011.

\bibitem{NS-POD-2014-JCP}
Alfonso Caiazzo, Traian Iliescu, Volker John, and Swetlana Schyschlowa.
\newblock A numerical investigation of velocity-pressure reduced order models
  for incompressible flows.
\newblock {\em J. Comput. Phys.}, 259:598--616, 2014.

\bibitem{iFEM}
Long Chen.
\newblock ifem: an innovative finite element methods package in matlab.
\newblock {\em Preprint, University of Maryland}, 2008.

\bibitem{Proj-Chorin-1968-MC}
Alexandre~Joel Chorin.
\newblock Numerical solution of the {N}avier-{S}tokes equations.
\newblock {\em Math. Comp.}, 22:745--762, 1968.

\bibitem{Proj-Chorin-1969-MC}
Alexandre~Joel Chorin.
\newblock On the convergence of discrete approximations to the
  {N}avier-{S}tokes equations.
\newblock {\em Math. Comp.}, 23:341--353, 1969.

\bibitem{Ciarlet-FEM-2002}
Philippe~G. Ciarlet.
\newblock {\em The finite element method for elliptic problems}, volume~40 of
  {\em Classics in Applied Mathematics}.
\newblock Society for Industrial and Applied Mathematics (SIAM), Philadelphia,
  PA, 2002.

\bibitem{Frutos-ModifiedProjection-2018-AMC}
Javier de~Frutos, Bosco Garc\'{\i}a-Archilla, and Julia Novo.
\newblock Error analysis of projection methods for non inf-sup stable mixed
  finite elements. {T}he transient {S}tokes problem.
\newblock {\em Appl. Math. Comput.}, 322:154--173, 2018.

\bibitem{POD-AC-SINUM-2020}
Victor DeCaria, Traian Iliescu, William Layton, Michael McLaughlin, and Michael
  Schneier.
\newblock An artificial compression reduced order model.
\newblock {\em SIAM J. Numer. Anal.}, 58(1):565--589, 2020.

\bibitem{POD-HDG}
Guosheng Fu and Zhu Wang.
\newblock P{OD}-({H}){DG} method for incompressible flow simulations.
\newblock {\em J. Sci. Comput.}, 85(2):Paper No. 24, 20, 2020.

\bibitem{749}
Vivette Girault and Pierre-Arnaud Raviart.
\newblock {\em Finite element methods for {N}avier-{S}tokes equations - Theory
  and algorithms}, volume~5 of {\em Springer Series in Computational
  Mathematics}.
\newblock Springer-Verlag, Berlin, 1986.

\bibitem{Proj-Overview-2006-CMAME}
J.~L. Guermond, P.~Minev, and Jie Shen.
\newblock An overview of projection methods for incompressible flows.
\newblock {\em Comput. Methods Appl. Mech. Engrg.}, 195(44-47):6011--6045,
  2006.

\bibitem{Proj-Guermond-1998-NM}
J.-L. Guermond and L.~Quartapelle.
\newblock On the approximation of the unsteady {N}avier-{S}tokes equations by
  finite element projection methods.
\newblock {\em Numer. Math.}, 80(2):207--238, 1998.

\bibitem{Heywood-Rannacher-1982-SINUM-1}
John~G. Heywood and Rolf Rannacher.
\newblock Finite element approximation of the nonstationary {N}avier-{S}tokes
  problem. {I}. {R}egularity of solutions and second-order error estimates for
  spatial discretization.
\newblock {\em SIAM J. Numer. Anal.}, 19(2):275--311, 1982.

\bibitem{John-PSPGAnalysis-SIAM-2015}
Volker John and Julia Novo.
\newblock Analysis of the pressure stabilized {P}etrov-{G}alerkin method for
  the evolutionary {S}tokes equations avoiding time step restrictions.
\newblock {\em SIAM J. Numer. Anal.}, 53(2):1005--1031, 2015.

\bibitem{Koc-TimePointwise-SINUM-2021}
Birgul Koc, Samuele Rubino, Michael Schneier, John Singler, and Traian Iliescu.
\newblock On optimal pointwise in time error bounds and difference quotients
  for the proper orthogonal decomposition.
\newblock {\em SIAM J. Numer. Anal.}, 59(4):2163--2196, 2021.

\bibitem{Galerkin-POD-NM-2001}
K.~Kunisch and S.~Volkwein.
\newblock Galerkin proper orthogonal decomposition methods for parabolic
  problems.
\newblock {\em Numer. Math.}, 90(1):117--148, 2001.

\bibitem{Galerkin-POD-SINUM-2002}
K.~Kunisch and S.~Volkwein.
\newblock Galerkin proper orthogonal decomposition methods for a general
  equation in fluid dynamics.
\newblock {\em SIAM J. Numer. Anal.}, 40(2):492--515, 2002.

\bibitem{Luo-POD}
Zhendong Luo and Goong Chen.
\newblock {\em Proper orthogonal decomposition methods for partial differential
  equations}.
\newblock Mathematics in Science and Engineering. Elsevier/Academic Press,
  London, 2019.

\bibitem{Luo-POD-NS}
Zhendong Luo, Jing Chen, I.~M. Navon, and Xiaozhong Yang.
\newblock Mixed finite element formulation and error estimates based on proper
  orthogonal decomposition for the nonstationary {N}avier-{S}tokes equations.
\newblock {\em SIAM J. Numer. Anal.}, 47(1):1--19, 2008/09.

\bibitem{POD-PressurePoissonEquation-JFM-2005}
Bernd~R. Noack, Paul Papas, and Peter~A. Monkewitz.
\newblock The need for a pressure-term representation in empirical galerkin
  models of incompressible shear flows.
\newblock {\em J. Fluid Mech.}, 523:339--365, 2005.

\bibitem{POD-LPS-SINUM-2021}
Julia Novo and Samuele Rubino.
\newblock Error analysis of proper orthogonal decomposition stabilized methods
  for incompressible flows.
\newblock {\em SIAM J. Numer. Anal.}, 59(1):334--369, 2021.

\bibitem{Rannacher-1992-Projection}
Rolf Rannacher.
\newblock On {C}horin's projection method for the incompressible
  {N}avier-{S}tokes equations.
\newblock In {\em The {N}avier-{S}tokes equations {II}---theory and numerical
  methods ({O}berwolfach, 1991)}, volume 1530 of {\em Lecture Notes in Math.},
  pages 167--183. Springer, Berlin, 1992.

\bibitem{Ravindran-Boussinesq-NMPDE-2010}
S.~S. Ravindran.
\newblock Error analysis for {G}alerkin {POD} approximation of the
  nonstationary {B}oussinesq equations.
\newblock {\em Numer. Methods Partial Differential Equations},
  27(6):1639--1665, 2011.

\bibitem{POD-LPS-SINUM-2020}
Samuele Rubino.
\newblock Numerical analysis of a projection-based stabilized {POD}-{ROM} for
  incompressible flows.
\newblock {\em SIAM J. Numer. Anal.}, 58(4):2019--2058, 2020.

\bibitem{Proj-Shen-1992-SINUM}
Jie Shen.
\newblock On error estimates of projection methods for {N}avier-{S}tokes
  equations: first-order schemes.
\newblock {\em SIAM J. Numer. Anal.}, 29(1):57--77, 1992.

\bibitem{Proj-Shen-1992-NM}
Jie Shen.
\newblock On error estimates of some higher order projection and
  penalty-projection methods for {N}avier-{S}tokes equations.
\newblock {\em Numer. Math.}, 62(1):49--73, 1992.

\bibitem{Galerkin-POD-ErrorAnalysis-SINUM-2014}
John~R. Singler.
\newblock New {POD} error expressions, error bounds, and asymptotic results for
  reduced order models of parabolic {PDE}s.
\newblock {\em SIAM J. Numer. Anal.}, 52(2):852--876, 2014.

\bibitem{Proj-Teman-1968-French}
Roger Temam.
\newblock Une m\'{e}thode d'approximation de la solution des \'{e}quations de
  {N}avier-{S}tokes.
\newblock {\em Bull. Soc. Math. France}, 96:115--152, 1968.

\bibitem{Galerkin-POD-Volkwein-LectureNotes-2011}
S.~Volkwein.
\newblock Model reduction using proper orthogonal decomposition.
\newblock {\em Lecture Notes, Faculty of Mathematics and Statistics, University
  of Konstanz}, 2011.

\end{thebibliography}
\end{document}